\newtheorem{theorem}{Theorem}
\newtheorem{proposition}[theorem]{Proposition}
\newtheorem{remark}[theorem]{Remark}
\begin{document}
	
\title{Pest control using farming awareness: 
impact of time delays and optimal use of biopesticides\thanks{This is a preprint of a paper 
whose final and definite form is published by 'Chaos Solitons Fractals' (ISSN: 0960-0779). 
Paper Submitted 08/Sept/2020; Revised 25/Feb/2021; Accepted 10/March/2021.}}
	
\author{Teklebirhan Abraha$^{1}$\\ {\tt tekbir98@yahoo.com}
\and Fahad Al Basir$^{2}$\\ {\tt fahadbasir@gmail.com}
\and Legesse Lemecha Obsu$^{1}$\\ {\tt legesse.lemecha@astu.edu.et}
\and Delfim F. M. Torres$^{3}$\thanks{Corresponding author. Email: delfim@ua.pt}\\ {\tt delfim@ua.pt}}
	
\date{$^{1}$\small{Department of Mathematics, Adama Science and Technology University, Adama, Ethiopia}\\[0.2cm]
$^{2}$\small{Department of Mathematics, Asansol Girls' College, West Bengal 713304, India}\\[0.2cm]
$^{3}$\small{R\&D Unit CIDMA, Department of Mathematics, University of Aveiro, 3810-193 Aveiro, Portugal}}
	
\maketitle
	
% --------------------------------------------
	
\begin{abstract}
We investigate a mathematical model in crop pest management,
considering plant biomass, pest, and the effect of farming awareness.
The pest population is divided into two compartments: susceptible pest
and infected pest. We assume that the growth rate of self-aware people
is proportional to the density of healthy pests present in the crop field.
Impacts of awareness is modeled via a saturated term. It is further assumed
that self-aware people will adopt biological control methods, namely integrated
pest management. Susceptible pests are detrimental to crops and, moreover,
there may be some time delay in measuring the healthy pests in the crop field.
A time delay may also take place while becoming aware of the control strategies
or taking necessary steps to control the pest attack. In agreement, we develop
our model incorporating two time delays into the system. The existence and the stability
criteria of the equilibria are obtained in terms of the basic reproduction number
and time delays. Stability switches occur through Hopf-bifurcation when time delays
cross critical values. Optimal control theory has been applied for the cost-effectiveness
of the delayed system. Numerical simulations illustrate the obtained analytical results.
		
\bigskip
		
\noindent \textbf{Keywords:} mathematical modeling of biological systems;
time delays; stability; Hopf bifurcation; optimal control; numerical simulations.

\bigskip

\noindent \textbf{MSC 2020:} 34H20; 37G15; 49N90; 92D25.
\end{abstract}
	
% -------------------------------------------
	
\section{Introduction}
	
In recent times, integrated pest management is gaining more attention
among researchers and its application is also increasing in the crop field.
This method seeks to reduce the reliance on pesticides by emphasizing
the contribution of biological control agents. The important role of microbial pesticides
in integrated pest management is well-known in agriculture, forestry, and public health.
As integrated pest management, bio-pesticides give noticeable pest control reliability
in case of crops \cite{Bhatta2}. The use of viruses against insect pests, as pest control agents,
is seen in North America and in European countries \cite{Franz,Falcon,Naranjo}.
Awareness campaigns, in particular through radio or TV, are required so that people
will gain trustworthiness on a biological control approach.
Farmers, in their own awareness, can keep the crop under observation
and, therefore, if correctly instructed, they will spray bio-pesticides
or incorporate fertile to make the pest susceptible to their bio-agents.
For the effect of awareness coverage in controlling infectious diseases
we refer the reader to \cite{AMC}, where a SIS model is formulated considering 
individuals' behavioral changes due to the influences of media coverage, 
and where the susceptible class is divided into two subclasses: 
aware susceptible and unaware susceptible. 

Correct and relevant knowledge about crop and its pests is very much essential
for people engaged in cultivation. The role of electronic media is critical
for keeping the farming community updated and by providing them with
relevant agricultural information \cite{Khan,Kumar2}. Accessible pesticide
information campaigns help farmers to be aware on the serious risks that pesticides
have on human health and environment and minimize negative effects \cite{risks}.
Adopting awareness programs, intended to educate farmers', results
in a better comprehensive development for the cultivars and also for the farmers.
Farmers learn the use and dangers of pesticides mainly by oral communication.
Self-aware farmers employ considerably improved agronomic practices,
safeguarding health and reducing environmental hazards \cite{Yang2014}.
Therefore, awareness is important in crop pest management.
	
Television, radio, and mobile telephony are particularly useful media
in providing information about agricultural practices and crop protection \cite{Khan}.
Adopting new technologies during agricultural awareness programs represents
a major route for innovating and improving agronomy. Le Bellec et al. \cite{LeBellec}
have studied how an enabling environment for interactions between farmers,
researchers, and other factors, can contribute to reduce current problems
associated with crop. Al Basir et al., describe the participation
of farming communities in Jatropha projects for biodiesel production
and protection of plants from mosaic disease, using a mathematical model
to forecast the development of renewable energy resources \cite{mma}.
In \cite{EC2}, authors have developed a mathematical model for pest control
using bio-pesticides. Moreover, they also incorporate optimal control theory
to minimize the cost in pest management due to bio-pesticides.
For the use of optimal control theory to eradicate the number of parasites
in agroecosystems, see \cite{MR3660471}. The usefulness of time-delays
in epidemiological modeling is well-known \cite{MyID:430}. In \cite{JTB}, 
a model for pest control is proposed and analyzed, 
where the impact of farming awareness
and a time delay in local awareness is investigated. They conclude
that raising awareness among people, with tolerable time delay,
may be a proper aspect for the control of pests in a crop field
while reducing the serious issues that pesticides have on human
health and environment \cite{JTB}. Later, in \cite{IJB}, Al Basir
has discussed the effects of delay in pest control due to the implementation
of control interventions. In \cite{ridm}, Al Basir and Ray analyze
the dynamics of vector borne plant disease dynamics influenced
by farming awareness. Here, a mathematical model is formulated
to protect crops through awareness campaigns, modeled via saturated terms,
and a delayed optimal control problem for biopesticides is posed and solved.
	
The paper is organized as follows. In Section~\ref{sec:2},
the model is derived assuming that the rate of awareness
is proportional to the number of susceptible pests in the field.
Moreover, we make the model more realistic considering
a time delay due to the measure of pest in the field.
In Section~\ref{sec:3}, nonnegativity and boundedness
of the solutions are proved by finding the invariant region.
The equilibria, the basic reproduction number,
and the stability of the pest-free equilibrium
are studied, using qualitative theory, in Section~\ref{sec:4}.
In Sections~\ref{sec:5a} and \ref{sec4.2}, we study the direction 
and stability of the Hopf bifurcation, investigating the stability switches 
of the equilibrium points, respectively for the system without and with delays.
We show that stability switch occurs through Hopf bifurcation.
An optimal control problem is then formulated and solved analytically
in Section~\ref{sec:5}, with the goal to minimize the cost of biopesticides.
The obtained analytical results are illustrated through numerical simulations
in Section~\ref{sec:6}. We end with Section~\ref{sec:7} of
discussion and conclusion.
	
% -------------------------------------------
	
\section{Model Derivation}
\label{sec:2}
	
In this section, we present a deterministic pest control model to prevent yield loss.
The model includes crop biomass, $X(t)$; healthy pest, $S(t)$; infected pest, $I(t)$;
and level of awareness, $A(t)$. Due to the finite size of the crop field,
we assume logistic growth for the density of crop biomass, with net growth
rate $r$ and carrying capacity $K$. Susceptible attacks the crop, thereby
causing considerable crop reduction. If we infect the susceptible pest by pesticides,
then the attack by pest can be controlled. Here we assume that self-aware farmers
will adopt biological pesticides for the control of crop pest as it has fewer
side effects and are also environment-friendly \cite{JTB}. Bio-pesticides
are used to infect healthy pest. The infected pest has additional mortality
due to infection. We further assume that infected pests cannot consume crop \cite{EC2}.
	
Let $\lambda$ be the consumption rate of pests with conversion rate $m$.
There is a pest infection rate, $h$, because of self-aware human interactions
and activity such as the use of bio-pesticides, modeled via the usual mass action term
$\frac{hAS}{1+A}$ \cite{EC1}. Here, $d$ is the natural mortality rate of pest
and $\alpha$ is the additional mortality rate of infected pest due to
self-aware people activity.
It is assumed that the level of awareness will increase at a rate proportional
to the number of susceptible pests per plant noticed in the farming system.
A fading of interest in this exploitation is possible,
and we denote by $\eta$ the rate of awareness fading.
	
There might be a delay in observing the number or activity of pests
in a field. Usually, this estimate is generated by observing previous
cases of pest occurrence and thus the intensity of awareness
and the level of implication of preventive countermeasures varies.
A delay $\tau_1> 0$ is expected in the execution of such measures.
The intensity of the awareness programs, being executed at time $t$,
will be in accordance with the number of pests at the time $t-\tau_1> 0$.
Also, after an advertisement, farmers take some time to become aware
of the technologies/pesticide to use and their management. We assume
$\tau_2>0$ as the time delay parameter taken for organizing an awareness
campaign and the farmers to become aware.
	
Based on the above assumptions, the mathematical model
\begin{equation}
\label{delay1}
\begin{cases}
\frac{dX(t)}{dt} = r X(t)\left[1 - \frac{X(t)}{K}\right] - \lambda X(t)S(t),\\
\frac{dS(t)}{dt} = m\lambda X(t)S(t) - \frac{hA(t-\tau_2)S(t)}{1+A(t-\tau_2)}  -dS(t),\\
\frac{dI(t)}{dt} = \frac{hA(t-\tau_2)S(t)}{1+A(t-\tau_2)} - (d+\alpha)I(t),\\
\frac{dA(t)}{dt} = \omega + aS(t-\tau_1) - \eta A(t),
\end{cases}
\end{equation}
is formulated subject to initial conditions
\begin{equation}
\label{eq:ic}
X(\phi)>0, \quad S(\phi)>0, \quad I(\phi)>0, \quad A(\phi)>0,
\end{equation}
where $\phi\in[-\tau, 0]$ with $\tau=\max\{\tau_1,\tau_2\}$.
The meaning of the parameters of model \eqref{delay1} is summarized
in Table~\ref{tab3.1}, together with the values that later
will be used in our numerical simulations.
% ----------------------------------------------
\begin{table}[H]
\begin{center}
\caption{Meaning of the parameters of model \eqref{delay1}
with values used in Section~\ref{sec:6} for numerical simulations.}
{\small
\begin{tabular}{|c|l|l|} \hline
Parameters  & Description  & Value \\ \hline
$r$ & growth rate of crop biomass & 0.1 day$^{-1}$  \\
$K$ & maximum density of crop biomass & 100 m$^{-2}$ \\
$\lambda$ & consumption rate of pest& 0.05 biomass$^{-1}$ day$^{-1}$\\
$d$ & natural mortality of pest& 0.02 day$^{-1}$\\
$m$ & conversion efficacy & 0.6 \\
$h$ & aware people activity rate & 0.025 day$^{-1}$\\
$a$ & aware people growth rate &  0.012 {day$^{-1}$}  \\
$\alpha$ & additional mortality rate & 0.025 {day$^{-1}$}\\
$\omega$ & rate of awareness from global source & 0.003 {day$^{-1}$}\\
$\eta$ & fading of memory of aware people & 0.015 {day$^{-1}$} \\ \hline
\end{tabular}}
\label{tab3.1}
\end{center}
\end{table}
	
% -------------------------------------------
	
\section{Model Analysis}
\label{sec:3}
	
In this section, some basic properties of the solutions
of the delayed system (\ref{delay1}) are proved. In concrete,
we show positive invariance and boundedness of solutions.

\begin{theorem}[Non-negative invariance]
\label{thm1}
All solutions of (\ref{delay1}) with given/fixed initial
conditions \eqref{eq:ic} are non-negative.
\end{theorem}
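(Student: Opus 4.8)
The plan is to prove non-negativity by showing that each state variable cannot cross zero from positive initial data, exploiting the structure of the right-hand sides of \eqref{delay1}. The standard and most robust approach for systems of this form is to observe that whenever a given variable is zero while the others remain non-negative, its time derivative is non-negative, so the corresponding coordinate axis acts as a barrier that trajectories cannot penetrate into the negative orthant. I would make this rigorous by integrating each equation explicitly, writing the solution of each scalar equation along a trajectory as an exponential of an integrating factor times a non-negative contribution.

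\medskip

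\noindent\textbf{Key steps.} First I would rewrite each equation in the delayed system \eqref{delay1} so that the linear self-decay terms are grouped, allowing an integrating-factor representation. For the crop biomass, from
\begin{equation*}
\frac{dX(t)}{dt} = X(t)\left[r\left(1-\frac{X(t)}{K}\right)-\lambda S(t)\right],
\end{equation*}
one obtains
\begin{equation*}
X(t)=X(0)\,\exp\left\{\int_{0}^{t}\left[r\left(1-\frac{X(s)}{K}\right)-\lambda S(s)\right]ds\right\}>0,
\end{equation*}
since $X(0)>0$ and the exponential is strictly positive regardless of the sign of the integrand. The same device applies to $S(t)$: writing
\begin{equation*}
S(t)=S(0)\,\exp\left\{\int_{0}^{t}\left[m\lambda X(s)-\frac{hA(s-\tau_2)}{1+A(s-\tau_2)}-d\right]ds\right\}>0.
\end{equation*}
For $I(t)$ and $A(t)$, the equations contain non-negative source terms, so I would use the variation-of-constants (integrating-factor) formula instead. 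For instance,
\begin{equation*}
A(t)=e^{-\eta t}\left[A(0)+\int_{0}^{t}e^{\eta s}\bigl(\omega+aS(s-\tau_1)\bigr)\,ds\right],
\end{equation*}
which is non-negative once $S$ is known to be non-negative on the relevant interval, and similarly for $I(t)$ using the non-negativity of $S$ and of $A/(1+A)$.

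\medskip

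\noindent\textbf{Main obstacle.} The delicate point is the presence of the delays $\tau_1,\tau_2$: the source term $aS(t-\tau_1)$ in the $A$-equation and the factor $A(t-\tau_2)/(1+A(t-\tau_2))$ in the $S$- and $I$-equations depend on past values of the state. The expressions above are therefore not truly closed-form solutions but hold pointwise provided the delayed arguments are already known to be non-negative. To handle this cleanly I would argue on successive intervals using the method of steps: on $[0,\tau]$ the delayed arguments lie in $[-\tau,0]$, where \eqref{eq:ic} guarantees positivity, so the integrating-factor formulas immediately yield non-negativity of the solution on $[0,\tau]$; one then repeats the argument on $[\tau,2\tau]$, where the delayed arguments now fall in the already-treated interval $[0,\tau]$, and proceeds inductively. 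The only genuine subtlety is ensuring the denominator $1+A(t-\tau_2)$ never vanishes, which is automatic because $A\ge 0$ forces $1+A\ge 1>0$; hence the saturated infection term stays bounded and non-negative, and the induction closes without difficulty.
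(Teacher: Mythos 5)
Your argument is correct in substance, but it follows a genuinely different route from the paper: the paper's proof is essentially a one-line appeal to the ``fundamental lemma'' of Yang, Chen and Chen (reference [57]), asserting existence of the solution in $\mathbb{R}^4_+$ and invariance of the positive octant without displaying any computation. Your proof, by contrast, is self-contained and makes the mechanism visible: the exponential (integrating-factor) representation shows that $X$ and $S$ can never reach zero from positive data, the variation-of-constants formula with non-negative source terms $\omega+aS(t-\tau_1)$ and $\frac{hA(t-\tau_2)S}{1+A(t-\tau_2)}$ keeps $A$ and $I$ non-negative, and the method of steps resolves the apparent circularity created by the delays while also guaranteeing $1+A(t-\tau_2)\geq 1$, so the saturated term is well defined. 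What the paper's citation buys is brevity; what your argument buys is transparency and independence from an external lemma, which is arguably more informative for the reader. One small repair is needed to make your induction airtight: since $\tau=\max\{\tau_1,\tau_2\}$, it is \emph{not} true that for $t\in[0,\tau]$ the delayed arguments $t-\tau_1$ and $t-\tau_2$ both lie in $[-\tau,0]$ (take $\tau_1\neq\tau_2$). You should instead advance in steps of length $\min\{\tau_1,\tau_2\}>0$ (the paper assumes both delays positive), or argue by continuation from the first hypothetical time a coordinate would vanish; either fix closes the induction and the rest of your reasoning goes through unchanged.
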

	
\begin{proof}
The result is straightforward: using the fundamental lemma in \cite{57},
one can easily show that a solution to the initial value problem
(\ref{delay1})--(\ref{eq:ic}) exists in the region $\mathbb{R}^4_+$
and all solutions remain non-negative for all $t > 0$.
Therefore, the positive octant $\mathbb{R}^4_+$ is an invariant region.
\end{proof}
	
Theorem~\ref{thm1} is important because positivity implies,
biologically, the survival of the populations. We now prove
another important characteristic of the solutions of (\ref{delay1}):
they are bounded.
	
\begin{theorem}[Boundedness of solutions]
\label{thm2}
Every solution of system (\ref{delay1}) that starts in
\begin{equation*}
\mathcal{D} = \left\{(X, S, I, A) \in \mathbb{R}^4_+
: 0\leq X\leq B_1, 0\leq S+I+X\leq B_2,0\leq A\leq\frac{\omega+aM}{\eta}\right\}
\end{equation*}
is uniformly bounded, where $\mathcal{D} $ is defined
with $B_1=\max~\{X(0),K\}$ and $B_2=\frac{aM (r+4d)}{4d}$.
\end{theorem}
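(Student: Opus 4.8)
The plan is to bound each component of the state in turn, exploiting the structural features of \eqref{delay1} and the non-negativity already secured in Theorem~\ref{thm1}. First I would dispose of the crop biomass. Since $S\ge 0$, the first equation gives $\frac{dX}{dt}\le rX\left(1-\frac{X}{K}\right)$, so $X$ is dominated by the solution of the logistic equation with the same initial datum. A standard comparison argument then yields $X(t)\le\max\{X(0),K\}=B_1$ for all $t\ge 0$, which is the first defining inequality of $\mathcal{D}$. This step is entirely unaffected by the delays, since no retarded term enters the $X$-equation.

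Next I would control the total biomass $W:=X+S+I$. Adding the first three equations of \eqref{delay1}, the two saturated infection terms $\pm\frac{hA(t-\tau_2)S(t)}{1+A(t-\tau_2)}$ cancel identically; crucially, this cancellation is insensitive to $\tau_2$ because both terms carry the same arguments. Using the conversion efficacy $m\le 1$ to discard the nonpositive contribution $(m-1)\lambda XS$, and discarding $-\alpha I\le 0$, adding $dW$ to both sides produces the differential inequality $\frac{dW}{dt}+dW\le (r+d)X-\frac{r}{K}X^2$. The right-hand side is a concave quadratic in $X$, hence bounded above by a constant $C=\frac{(r+d)^2K}{4r}$. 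Applying the comparison lemma to $\frac{dW}{dt}+dW\le C$ then gives a uniform bound $W(t)\le B_2$, the second defining inequality of $\mathcal{D}$, and in particular a uniform bound $S(t)\le M$ on the pest population.

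Finally I would bound the awareness level $A$. With $S(t)\le M$ now known for all $t$ (on $[-\tau,0]$ from the prescribed initial history \eqref{eq:ic}, and for $t\ge 0$ from the previous step), the retarded term satisfies $S(t-\tau_1)\le M$, so the last equation of \eqref{delay1} reduces to the \emph{delay-free} linear inequality $\frac{dA}{dt}\le\omega+aM-\eta A$. One further application of the comparison lemma yields $A(t)\le\frac{\omega+aM}{\eta}$, completing the description of $\mathcal{D}$ and establishing uniform boundedness.

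The step I expect to be the main obstacle is the rigorous treatment of the delays rather than the algebra. The saving observation is that the delays do not genuinely obstruct the estimates: the $\tau_2$-dependent terms cancel in the equation for $W$, while the $\tau_1$-dependent term becomes a mere constant once $S$ is bounded, so no specialized delay-comparison theorem is needed beyond the classical comparison lemma together with the positivity from Theorem~\ref{thm1}. The care required is bookkeeping: one must verify the sign condition $m\le 1$ holds in the regime of interest, and ensure the pest bound $M$ dominates $S$ uniformly over the whole history interval $[-\tau,0]$ (where the continuous initial data are automatically bounded) as well as for $t\ge 0$, so that the reduction of the $A$-inequality to a non-delayed form is legitimate. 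Invoking the fundamental lemma cited in Theorem~\ref{thm1} (reference~\cite{57}) lets each comparison argument be carried through for the delay system exactly as in the undelayed case.
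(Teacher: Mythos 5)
Your argument is essentially the paper's own proof: bound $X$ by logistic comparison, sum the first three equations so the saturated infection terms cancel and the delay is harmless, get a linear differential inequality for $W=X+S+I$, and then treat the delayed term $aS(t-\tau_1)$ as a constant to bound $A$ by one more linear comparison. The only (cosmetic) deviations are that you bound the quadratic $(r+d)X-\frac{r}{K}X^2$ by its vertex value $\frac{(r+d)^2K}{4r}$ instead of the paper's splitting $rX(1-X/K)+dX\le \frac{rM}{4}+dM$, which yields a slightly different constant than the stated $B_2$, and you reuse the symbol $M$ for the pest bound coming from $W$ rather than $\max\{X(0),K\}$ — neither affects the uniform boundedness conclusion.
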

	
\begin{proof}
Let us consider the first equation of our model \eqref{delay1}. Then,
\begin{equation}
\label{eq24}
\frac {dX}{dt} = r X\left[ 1-\frac{X}{K}\right]
-\lambda X S\leq rX\left[1-\frac{X}{K}\right]
\Longrightarrow \underset {t\rightarrow \infty}{ \lim~\sup}~X\leq M,
\end{equation}
where $M=\max~\{X(0), K\}$. Let $W=X+S+I$ at any time $t$. It follows,
using (\ref{eq24}) and the fact that $r X\left[ 1-\frac{X}{K}\right]$
is quadratic in $X$ and its maximum value is $\frac{rk}{4}$, that
\begin{eqnarray*}
\frac {dW}{dt} &=& r X\left[ 1-\frac{X}{K}\right]-dS-(d+\alpha)I\\
&\leq & r X\left[ 1-\frac{X}{K}\right]+dX-d(S+I+X)\\
&\leq &\frac{rM}{4}+dM-dW.
\end{eqnarray*}
Now, after a simple calculation, one gets
\begin{equation}
\label{eq25}
\underset {t\rightarrow \infty}{ \lim~ \sup }~W
=\frac{\frac{rM}{4}+dM}{d}=\frac{M (r+4d)}{4d}.
\end{equation}
Finally, from the last equation of system \eqref{delay1} and (\ref{eq25}),
we get
\begin{equation*}
\frac{dA}{dt} = aS(t-\tau) - \eta A\leq \frac{aM (r+4d)}{4d}-\eta A,
\end{equation*}	
which implies that $\underset {t\rightarrow \infty}{ \lim~ \sup }~A=\frac{aM (r+4d)}{4\eta d}$.
Thus, the region of attraction given by the set $\mathcal{D}$ is positively invariant,
attracting all solutions initiating inside the interior of the positive octant.
\end{proof}
	
% -------------------------------------------
	
\section{Equilibria and Stability}
\label{sec:4}
	
Model (\ref{delay1}) has three equilibria:
(i) the origin $E_0=\left(0,0,0,\frac{\omega}{\eta}\right)$,
(ii) the equilibrium in which only the healthy plants population thrives,
$E_1=\left(K,0,0,\frac{\omega}{\eta}\right)$, which is always feasible,
and (iii) the coexistence equilibrium, $E^*=(X^*,S^*,I^*,A^*)$ with
\begin{equation}
\label{eq:E*}
X^*=\frac{hA^*+d(1+A^*)}{m\lambda(1+A)},
\quad S^*=\frac{\eta A^*-\omega}{a},
\quad I^*=\frac{h S^*A^*}{(1+A)(a+\alpha)},
\end{equation}
where $A^*$ is the positive root of
\begin{equation}
\label{endemeq}
L_1A^2+L_2A+L_3=0
\end{equation} 
with
$$
L_1=-m\lambda^2\eta<0,
\quad L_2=-ra[h+d]+m\lambda^2[\omega-\eta],
\quad L_3=ra[m\lambda K-d]+m\lambda^2\omega.
$$

Our next result characterizes the
feasibility of the coexistence equilibrium $E^*$.

\begin{proposition}[Feasibility of the coexistence equilibrium]
\label{prop_1}
Let $E^*=(X^*,S^*,I^*,A^*)$ be as \eqref{eq:E*}.
\begin{itemize}
\item[(i)] If $L_3>0$, then there is 
a unique coexisting equilibrium $E^*$.

\item[(ii)] If $L_3=0$ and $L_2>0$, then there 
is a unique coexisting equilibrium $E^*$ 
with $A^*=-\frac{L_2}{L_1}$.

\item[(iii)] If $L_3<0$ and $L_2\leq 0$, 
then there is no positive coexisting equilibrium of the system.

\item[(iv)] Assuming that $L_3<0$ and $L_2>0$, 
\begin{itemize}
\item if $D=A^2_2-4 L_1 L_3>0$, then there exist 
two coexisting equilibria $E^*$ with $V^*=\frac{-L_2\pm\sqrt{D}}{2L_1}$;

\item if $D=0$, then there exists a unique coexisting equilibrium $E^*$;

\item if $D<0$, then there exists no positive coexisting equilibrium $E^*$.
\end{itemize}
\end{itemize}
\end{proposition}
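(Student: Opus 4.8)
The plan is to reduce the entire statement to an elementary root-counting problem for the quadratic $L_1A^2+L_2A+L_3=0$ in \eqref{endemeq}, exploiting the one structural fact that holds unconditionally, namely $L_1=-m\lambda^2\eta<0$. A feasible coexistence equilibrium $E^*$ corresponds to a positive root $A^*$ of \eqref{endemeq}, with the remaining components $X^*,S^*,I^*$ then read off from \eqref{eq:E*}, so counting positive roots is the core task. The two tools I would use are Vieta's formulas, writing the two roots $A_1,A_2$ as
$$A_1+A_2=-\frac{L_2}{L_1}, \qquad A_1A_2=\frac{L_3}{L_1},$$
together with the discriminant $D=L_2^2-4L_1L_3$ (the $A^2_2$ appearing in the statement being a typographical slip for $L_2^2$, just as $V^*$ in (iv) should read $A^*$).

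For part (i), since $L_1<0$, the hypothesis $L_3>0$ forces the product $A_1A_2=L_3/L_1<0$, so the two roots have opposite signs and exactly one is positive; this gives the unique $E^*$. For part (ii), $L_3=0$ makes the constant term vanish, so one root is $A=0$ and the other is $A=-L_2/L_1$; with $L_2>0$ and $L_1<0$ this second root is strictly positive, yielding again a single feasible $A^*=-L_2/L_1$.

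Parts (iii) and (iv) share the sign pattern $L_3<0$, under which $A_1A_2=L_3/L_1>0$, so any real roots carry the \emph{same} sign, and their common sign is decided by the sum $-L_2/L_1$. In part (iii), where $L_2\leq 0$, I would split into $L_2=0$ (then $D=-4L_1L_3<0$, so there is no real root) and $L_2<0$ (then the sum $-L_2/L_1<0$, so any real roots are both negative); either way no positive root exists. In part (iv), where $L_2>0$, the sum $-L_2/L_1>0$, so whenever the roots are real they are both positive, and the three subcases follow by reading off the sign of $D$: $D>0$ yields two positive roots, $D=0$ yields the double positive root $A^*=-L_2/(2L_1)$, and $D<0$ yields no real, hence no positive, root.

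The computation is entirely elementary, so the only place that needs care is the degenerate boundaries: in particular, verifying in part (iii) that the case $L_2=0$ produces a negative discriminant rather than a spurious positive root, and keeping the endpoint conventions consistent across the four cases so that they tile all sign combinations of $(L_2,L_3)$ without overlap or gap. A secondary point worth flagging is that positivity of $A^*$ alone does not by itself guarantee positivity of $S^*=(\eta A^*-\omega)/a$ and $I^*$; if full biological feasibility is intended one would also append the condition $A^*>\omega/\eta$, but the root-counting argument above already delivers exactly the statement as written.
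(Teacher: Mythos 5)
Your proposal is correct, and it is complete where the paper is terse: the paper's entire proof is a one-line appeal to Descartes' rule of signs applied to the quadratic \eqref{endemeq}, using only $L_1<0$, whereas you work instead through Vieta's formulas ($A_1+A_2=-L_2/L_1$, $A_1A_2=L_3/L_1$) combined with the discriminant $D=L_2^2-4L_1L_3$. The two routes are closely related for a quadratic, but yours buys something concrete: Descartes' rule, as invoked in the paper, only bounds the number of positive roots by the number of sign changes (up to an even defect), so cases (iii) and (iv) still require the discriminant discussion that the paper's proof leaves implicit; your argument handles all sign patterns, including the degenerate boundaries $L_2=0$ and $L_3=0$, explicitly and without gaps. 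You also correctly read $A_2^2$ as the typographical slip for $L_2^2$ and $V^*$ for $A^*$, consistently with what the quadratic forces. Your closing caveat is a genuine point the paper passes over: a positive root $A^*$ gives a feasible equilibrium only if additionally $A^*>\omega/\eta$, so that $S^*=(\eta A^*-\omega)/a>0$ (and hence $I^*>0$); the proposition as stated, and the paper's proof, tacitly identify ``coexisting equilibrium'' with ``positive root of \eqref{endemeq}'', and your remark flags precisely the extra condition needed for full biological feasibility.
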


\begin{proof}
The result follows applying Descartes' rule of signs to equation \eqref{endemeq}. 
\end{proof}

Linearizing system (\ref{delay1}) about 
the pest free equilibrium $E_1$, we obtain that
\begin{equation}
\label{2}
\frac{dY}{dt}=F Y(t)+GY(t-\tau_1)+HY(t-\tau_2)
\end{equation}
with $F$, $G$ and $H$ the $4\times4$ matrices 
$$
F=\left[
\begin{array}{cccc}
F_{11}& -\lambda X & 0 & 0\\
\lambda S &  0 & 0 & 0 \\
0 & \frac{hA}{1+A} & -d - \alpha& 0\\
0 & 0 & 0 &  -\eta\\
\end{array}
\right],
\
G=\left[
\begin{array}{cccc}
0 & 0 & 0 & 0\\
0 & 0 & 0 & \frac{h S}{(1+A)^2} \\
0 & 0 & 0 & \frac{h S}{(1+A)^2} \\
0 & 0 & 0 & 0
\end{array}
\right],
\
H=\left[
\begin{array}{cccc}
0 & 0 & 0 & 0\\
0 & 0 & 0 & 0 \\
0 & 0 & 0 & 0 \\
0 & a & 0 & 0
\end{array}
\right],
$$
where $F_{11}:=r\left(1- \frac{2X}{K}\right) -\lambda S$.
The characteristic equation is given by
\begin{equation*}
\triangle(\xi) =\mid {\xi}{I}-F-e^{-{\xi}{\tau_1}}G-e^{-{\xi}{\tau_2}}H\mid=0,
\end{equation*}
that is,
\begin{equation*}
(\xi+d+\alpha)(\xi^3+a_1\xi^2+a_2\xi+a_5+e^{-{\xi}{(\tau_1+\tau_2)}}[a_3+a_4\xi])=0,
\end{equation*}
with one eigenvalue being $\xi = -(d+\alpha)<0$ and the rest of the spectrum
being given by the roots of the transcendental equation
\begin{equation}
\label{3}
\psi(\xi,\tau_1,\tau_2)
=\xi^3+a_1\xi^2+a_2\xi+[a_3+ a_4\xi]e^{-\xi(\tau_1+\tau_2)} +a_5 =0,
\end{equation}
where
\begin{equation}
\label{eq:val:a:i}
a_1:=\eta-F_{11},
\quad a_2:=\eta F_{11}+\lambda^2 XS,
\quad a_3:=-\frac{ahSF_{11}}{(1+A)^2},
\quad a_4:=\frac{ahS}{(1+A)^2},
\quad a_5:=\eta\lambda^2XS.
\end{equation}
Note that the origin, $E_0$, is always stable, in view of the fact that its eigenvalues
$r>0$, $-m<0$, $-\zeta <0$, and $-\eta$ are all negative. At the equilibrium
$E_1$ with only healthy plants, however, the eigenvalues are $-r < 0$, $\lambda k-d $,
$-d-\alpha < 0$, and $-\eta<0$, so $E_1$ is stable only if $\lambda k-d<0$.
This stability is well characterized through the basic reproduction number, denoted by $R_0$, 
which is one of the most important quantities in epidemiology. 

\begin{theorem}[Basic reproduction number $R_0$]
\label{thm:R0}
The basic reproduction number of system (\ref{delay1}) 
is given by 
\[
R_0=\frac{m\lambda K(\eta+\omega)}{d(\eta+\omega)+h\omega}.
\]
\end{theorem}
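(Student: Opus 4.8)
The plan is to compute $R_0$ via the next-generation matrix method applied at the pest-free equilibrium $E_1=\left(K,0,0,\frac{\omega}{\eta}\right)$. The two invading compartments are the pest classes $S$ and $I$, both of which vanish at $E_1$, while the crop biomass $X$ and the awareness level $A$ act as the ``environment'' held at $X=K$ and $A^{*}=\frac{\omega}{\eta}$. First I would split the right-hand sides of the $S$- and $I$-equations into a new-infection part $\mathcal{F}$ and a transition part $\mathcal{V}$. The only genuinely reproductive term is the gain $m\lambda XS$ by which pests multiply through consuming crop, so I would take
\[
\mathcal{F}=\begin{pmatrix} m\lambda X S\\ 0\end{pmatrix},\qquad
\mathcal{V}=\begin{pmatrix}\dfrac{hAS}{1+A}+dS\\[2mm] -\dfrac{hAS}{1+A}+(d+\alpha)I\end{pmatrix},
\]
treating the biopesticide conversion $\frac{hAS}{1+A}$ and the mortalities as transfers between or out of the pest classes.

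Next I would linearize at $E_1$. A point worth emphasizing is that the delayed arguments $A(t-\tau_2)$ and $S(t-\tau_1)$ reduce to the constants $A^{*}$ and $S^{*}=0$ at equilibrium, so the Jacobians $F=D\mathcal{F}(E_1)$ and $V=D\mathcal{V}(E_1)$ are independent of $\tau_1,\tau_2$; consequently $R_0$ is unaffected by the delays. Evaluating the saturated term at $A^{*}=\frac{\omega}{\eta}$ gives $\frac{hA^{*}}{1+A^{*}}=\frac{h\omega}{\eta+\omega}$, whence
\[
F=\begin{pmatrix} m\lambda K & 0\\ 0&0\end{pmatrix},\qquad
V=\begin{pmatrix} d+\dfrac{h\omega}{\eta+\omega} & 0\\[2mm] -\dfrac{h\omega}{\eta+\omega} & d+\alpha\end{pmatrix}.
\]

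Finally, $R_0=\rho\!\left(FV^{-1}\right)$. Since $V$ is lower triangular and $F$ carries a single nonzero entry, the product $FV^{-1}$ has spectral radius equal to its $(1,1)$ entry, namely $\dfrac{m\lambda K}{\,d+h\omega/(\eta+\omega)\,}$; clearing the denominator yields the claimed expression $R_0=\dfrac{m\lambda K(\eta+\omega)}{d(\eta+\omega)+h\omega}$. The only steps demanding care are the modeling choice of what to count as a new infection versus a transition in the split $\mathcal{F}/\mathcal{V}$ (a different admissible split leaves $\rho(FV^{-1})$ invariant), and the simplification of the saturated awareness term; the linear algebra itself is routine because both matrices are essentially triangular. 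As a consistency check, $R_0>1$ is equivalent to positivity of the $S$-eigenvalue $m\lambda K-d-\frac{h\omega}{\eta+\omega}$ of the linearization at $E_1$, recovering exactly the instability threshold of the pest-free state discussed above.
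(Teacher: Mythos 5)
Your proposal is correct and follows essentially the same route as the paper: the next-generation matrix method at $E_1=\left(K,0,0,\frac{\omega}{\eta}\right)$ with infected compartments $(S,I)$, new-pest term $m\lambda XS$, transition terms $\frac{hAS}{1+A}+dS$ and $-\frac{hAS}{1+A}+(d+\alpha)I$, and spectral radius $\rho(FV^{-1})=\frac{m\lambda K(\eta+\omega)}{d(\eta+\omega)+h\omega}$. The only difference is cosmetic: the paper's displayed definitions of $F(Y)$ and $V(Y)$ have the labels interchanged relative to the standard convention you use, but the Jacobian matrices actually employed in its computation coincide with yours.
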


\begin{proof}
We use the next-generation matrix method.
Let $Y=(S,I)^T$. Then, from system \eqref{delay1}, we can write
\[
Y'=F(Y)-V(Y),
\]
where
\[
V(Y)=\left[
\begin{array}{c}
\	m\lambda XS\\
0
\end{array}
\right] 
\quad \mbox{and}\quad
F(Y)=
\left[
\begin{array}{c}
\frac{hAS}{1+A}+dS\\
-\frac{hAS}{1+A}+(d+\alpha)I
\end{array}
\right].
\]
The Jacobian matrix of $F(Y)$ and $V(Y)$, at the pest free equilibrium $E_1$, 
are given respectively as
\[
DF(E_1)=\left[
\begin{array}{cc}
m\lambda K & 0\\
0&  0  \\
\end{array}
\right]
\quad \text{ and } \quad
DV(E_1)=\left[
\begin{array}{cc}
\frac{h\omega}{\eta+\omega}+d& 0\\
-\frac{h\omega}{\eta+\omega}&  d+\alpha \\
\end{array}
\right]
\]
with
\[
(DV(E_1))^{-1}=\left[
\begin{array}{cc}
\frac{\eta+\omega}{h\omega+d(\eta+\omega)} & 0\\
\frac{h\omega}{(h\omega+d(\eta+\omega))(d+\alpha)}&  \frac{1}{d+\alpha}  \\
\end{array}
\right].
\]
The reproduction number $R_0$ is given by the spectral radius 
of $DF(E_1)(DV(E_1))^{-1}$, that is,
\[
R_0=\sigma(DF(E_1)(DV(E_1))^{-1})
=\sigma\left(
\left[
\begin{array}{cc}
\frac{m\lambda K(\eta+\omega)}{h\omega+d(\eta+\omega)}& 0\\
0&  0 \\
\end{array}
\right]\right)
=\frac{m\lambda K(\eta+\omega)}{d(\eta+\omega)+h\omega}.
\]
The proof is complete.
\end{proof}

\begin{remark}
\label{Rem1}
It is fundamental to note that $R_0$ does not depend on $a$. For this reason,
irrespective of how capably farmers turn aware of disease due to inspection
of infected plants, this by itself is not sufficient to result in the eradication
of infection. Because $R_0$ is monotonically decreasing with increasing of $\omega$,
this suggests that eradication of plant disease, as represented by a stable pest-free
steady-state $E_1$, is only possible if $R_0<1$. One existing means to achieve this
consists to increase the rate $\omega$ of global awareness.
\end{remark}

We finish this section characterizing the locally stability
of the the pest-free equilibrium $E_1$ 
in terms of the basic reproduction number $R_0$
given by Theorem~\ref{thm:R0}.

\begin{theorem}[Local stability of the pest-free equilibrium]
For $R_0<1$, the pest-free equilibrium $E_1$ is locally stable.
For $R_0>1$, $E_1$ is unstable and the coexistence equilibria $E^*$ exists.
A transcritical bifurcation occurs at $R_0=1$.
\end{theorem}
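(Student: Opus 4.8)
The plan is to read off the local stability of $E_1$ directly from the linearization of \eqref{delay1} at $E_1=(K,0,0,\omega/\eta)$, and then to translate the sign of the one nontrivial eigenvalue into a statement about $R_0$ via the formula in Theorem~\ref{thm:R0}. First I would compute the Jacobian of the delayed system at $E_1$ from scratch rather than relying on the matrices $F,G,H$ (which are in fact evaluated at the coexistence state, where the $(2,2)$ entry $m\lambda X-\tfrac{hA}{1+A}-d$ vanishes). The decisive observation is that $S=0$ at $E_1$, so every Jacobian entry carrying a factor $S$ disappears: in particular $\partial\dot S/\partial A(t-\tau_2)$ and $\partial\dot I/\partial A(t-\tau_2)$ are proportional to $S$ and hence vanish, leaving the coefficient $a$ of $S(t-\tau_1)$ in the $A$-equation as the only surviving delayed term. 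The linearization therefore becomes triangular: the $S$-component decouples as $\dot s=\mu s$ with
\[
\mu:=m\lambda K-d-\frac{h\omega}{\eta+\omega},
\]
while the $X$-, $I$- and $A$-equations contribute the eigenvalues $-r$, $-(d+\alpha)$ and $-\eta$. Since the only delay that remains, $\tau_1$, merely feeds forward into the $A$-equation (whose eigenvalue $-\eta$ is fixed independently), the characteristic roots at $E_1$ are exactly $\{-r,\;\mu,\;-(d+\alpha),\;-\eta\}$, and are independent of both $\tau_1$ and $\tau_2$.

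Next I would convert the sign of $\mu$ into a threshold condition on $R_0$. A short rearrangement gives
\[
\mu=\frac{d(\eta+\omega)+h\omega}{\eta+\omega}\,(R_0-1),
\]
so that $\operatorname{sign}(\mu)=\operatorname{sign}(R_0-1)$. Consequently, for $R_0<1$ all four eigenvalues are strictly negative and $E_1$ is locally asymptotically stable, whereas for $R_0>1$ the eigenvalue $\mu$ becomes positive and $E_1$ is unstable. To couple instability with the emergence of $E^*$, I would observe that $R_0>1$ forces $m\lambda K>d$, whence $L_3=ra[m\lambda K-d]+m\lambda^2\omega>0$; case (i) of Proposition~\ref{prop_1} then yields a unique coexistence equilibrium $E^*$.

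For the bifurcation at $R_0=1$, I would treat a model parameter (for instance $K$ or $\lambda$, both of which make $R_0$ strictly increasing) as the bifurcation parameter. At the threshold $\mu=0$ is a simple eigenvalue while the remaining three roots stay strictly in the left half-plane, and it crosses zero transversally since, e.g., $\partial\mu/\partial K=m\lambda\neq0$. At the same time $S^*=0$ at $R_0=1$, so the branch $E^*$ coalesces with $E_1$; the two equilibria exchange stability as $R_0$ passes through $1$, which is the signature of a transcritical bifurcation. I would make this rigorous by verifying the non-degeneracy and transversality hypotheses of Sotomayor's theorem on the reduced (delay-free) one-dimensional center direction. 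The step I expect to be the main obstacle is precisely this bifurcation analysis: one must justify that the critical zero eigenvalue is simple and that the delayed terms do not obstruct the center-manifold reduction. This is exactly why the decoupling established in the first step is indispensable, as it shows the critical mode is governed entirely by the ordinary, non-delayed $S$-equation, reducing the delayed bifurcation problem to a classical one.
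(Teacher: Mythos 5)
Your proposal is correct and follows essentially the same route as the paper: linearize at $E_1=(K,0,0,\omega/\eta)$, observe that the characteristic roots are $-r$, $-\eta$, $-(d+\alpha)$ and $m\lambda K-\frac{h\omega}{\eta+\omega}-d$, and translate the sign of the last one into the threshold $R_0\lessgtr 1$. You are in fact somewhat more complete than the paper's proof, which simply writes down the factored characteristic equation and stops at the stability claim: your observation that $S=0$ at $E_1$ annihilates the delayed terms (so the spectrum is delay-independent), your deduction $R_0>1\Rightarrow m\lambda K>d\Rightarrow L_3>0$ combined with Proposition~\ref{prop_1} to get existence of $E^*$, and your Sotomayor-type sketch for the transcritical bifurcation cover parts of the statement that the paper's proof leaves unaddressed.
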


\begin{proof}
The characteristic equation at the pest free equilibrium $E_1$ is given by
\[
(\xi+r)(\xi+\eta)\left(\xi-m\lambda K+\frac{h\omega}{\eta+\omega} +d\right)
\left(\xi +d+ \alpha\right) = 0.
\]
Thus, the eigenvalues are $-r$ ,$-\eta$,$-d-\alpha$, 
and $m\lambda K-\frac{h\omega}{\eta+\omega}-d$.
Therefore, $E_1$ is locally stable if 
$m\lambda K-\frac{h\omega}{\eta+\omega}-d<0$,
from which $m\lambda K<h\omega+d(\eta+\omega)$,
that is,
$$
\frac{m\lambda K}{h\omega+d(\eta+\omega)}<1.
$$
We conclude that $R_0<1$, as intended.
\end{proof}
	
% -------------------------------------------

\section{Stability and Hopf Bifurcation in Absence of Delays}
\label{sec:5a}

In this section we consider $\tau_1=\tau_2=0$, investigating the direction 
and stability of the bifurcating periodic solution. More precisely, 
we focus our analysis on the consumption rate $\lambda$ of pest, 
which is one of the biologically most important parameters of the model. 

\begin{theorem}[Local stability of the coexistence equilibrium---undelayed case]
\label{thm:7}
Let us consider system \eqref{delay1} with $\tau_1=\tau_2=0$
and let $E^* = (X^*,S^*,I^*,A^*)$ be its coexistence equilibrium. 
Then $E^*$ is locally asymptotically stable if, and only if,
\begin{equation}
\label{ChE}
\sigma_3>0 \quad \text{ and } \quad
\sigma_1\sigma_2-\sigma_3>0,
\end{equation}
where
\begin{equation}
\label{eq:sigma:values}
\begin{split}
\sigma_1&=\frac{rX^*}{K}+\alpha+d +\eta,\\
\sigma_2&=\frac{\eta r^2X^*}{K}+\frac{ahS^*}{(1+A^*)^2}+m\lambda^2X^*S^*,\\
\sigma_3&=-\frac{r^2 ahS^*X^*}{K(1+A^*)^2}+m\eta\lambda^2X^*S^*,
\end{split}
\end{equation}
and
$$
\sigma_1 \sigma_2 -\sigma_3
=\frac{r^2X^*\eta}{K}\left(\frac{r^2X^*}{K}+\eta\right)
+ \frac{a\eta hS^*}{(1+A^*)^2}-\frac{m\lambda^2{X^*}^2S^*r^2}{K}.
$$
\end{theorem}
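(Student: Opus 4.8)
The plan is to reduce the stability of $E^*$ to the Routh--Hurwitz test for a single cubic. Setting $\tau_1=\tau_2=0$ in \eqref{2}, the linearization at the coexistence equilibrium is governed by the constant Jacobian $J=F+G+H$, evaluated at $E^*$. I would first exploit the equilibrium identity furnished by the first equation of \eqref{delay1}, namely $r\bigl(1-\tfrac{X^*}{K}\bigr)=\lambda S^*$, which turns the $(1,1)$ entry into $F_{11}=r\bigl(1-\tfrac{2X^*}{K}\bigr)-\lambda S^*=-\tfrac{rX^*}{K}$. Because the infected--pest compartment does not feed back into any other equation once $\tau_2=0$, the column of $J$ corresponding to $I$ vanishes apart from the diagonal entry $-(d+\alpha)$; consequently the characteristic polynomial factors as $(\xi+d+\alpha)$ times the cubic characteristic polynomial of the $3\times3$ block $\tilde J$ acting on $(X,S,A)$. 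This is exactly the reduction already recorded in \eqref{3}: with $\tau_1=\tau_2=0$ the exponential collapses to $1$ and one is left with $\xi^3+\sigma_1\xi^2+\sigma_2\xi+\sigma_3=0$.

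Next I would read off the coefficients from $\tilde J$ in the standard way: $\sigma_1=-\operatorname{tr}\tilde J$, $\sigma_2$ equals the sum of the three $2\times2$ principal minors of $\tilde J$, and $\sigma_3=-\det\tilde J$. Substituting $F_{11}=-\tfrac{rX^*}{K}$ gives at once $\sigma_1=\tfrac{rX^*}{K}+\eta>0$, the positivity being automatic since $r,K,\eta>0$ and $X^*>0$ at any feasible $E^*$ (Proposition~\ref{prop_1}). The Routh--Hurwitz criterion for a cubic states that all three roots have negative real part if and only if $\sigma_1>0$, $\sigma_3>0$ and $\sigma_1\sigma_2-\sigma_3>0$. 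Since the first inequality holds unconditionally it may be discarded, and together with the already isolated negative eigenvalue $-(d+\alpha)$ this shows that $E^*$ is locally asymptotically stable precisely when $\sigma_3>0$ and $\sigma_1\sigma_2-\sigma_3>0$, which is \eqref{ChE}. As the Routh--Hurwitz conditions are necessary as well as sufficient, the ``if and only if'' is obtained in one stroke.

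What remains is to bring $\sigma_2,\sigma_3$ into the closed forms of \eqref{eq:sigma:values} and to expand $\sigma_1\sigma_2-\sigma_3$. I would compute the three principal minors and the determinant of $\tilde J$ explicitly, substitute $F_{11}=-\tfrac{rX^*}{K}$ throughout, and collect terms; in the product $\sigma_1\sigma_2$ the diagonal contributions assemble the $\tfrac{rX^*}{K}\bigl(\tfrac{rX^*}{K}+\eta\bigr)\eta$ and $\tfrac{a\eta hS^*}{(1+A^*)^2}$ pieces, while the coupling between the $X$-- and $S$--equations produces the term in $m\lambda^2X^*S^*$ that survives, after subtraction of $\sigma_3$, as the displayed expression. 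I expect the only real obstacle to be this bookkeeping --- in particular keeping the signs straight when differentiating the saturating interaction $\tfrac{hAS}{1+A}$ with respect to $A$, and tracking the several terms that flip sign under the substitution $F_{11}=-\tfrac{rX^*}{K}$ --- together with verifying the cancellations that compress an a~priori unwieldy parameter polynomial into the compact form stated in the theorem. No new idea beyond the equilibrium identity and the cubic Routh--Hurwitz test is required.
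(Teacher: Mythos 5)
Your overall route is the same as the paper's: evaluate the Jacobian of \eqref{delay1} at $E^*$ using the equilibrium identities, observe that the $I$-column decouples so $(\xi+d+\alpha)$ splits off, and settle the remaining cubic by the Routh--Hurwitz test, discarding the conditions that hold automatically. The difficulty is that the entire content of the theorem --- that the cubic factor has exactly the coefficients \eqref{eq:sigma:values} and hence that \eqref{ChE} with those expressions is equivalent to stability --- is precisely the part you set aside as ``bookkeeping'', and that bookkeeping cannot terminate in the stated formulas along the path you describe. Your one explicit computation already conflicts with the statement: the negative trace of the reduced $3\times 3$ block is $\sigma_1=\frac{rX^*}{K}+\eta$, whereas \eqref{eq:sigma:values} has $\sigma_1=\frac{rX^*}{K}+\alpha+d+\eta$; one cannot simultaneously factor out $(\xi+d+\alpha)$ and keep $d+\alpha$ inside $\sigma_1$, yet you proceed as if your value delivers the theorem as written, without noticing or resolving the mismatch.

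The deferred computations fare no better. With the derivatives you yourself point to, $\partial \dot S/\partial A=-\frac{hS^*}{(1+A^*)^2}$ and $\partial \dot I/\partial A=+\frac{hS^*}{(1+A^*)^2}$, the principal minors and determinant of the reduced block give $\sigma_2=\frac{\eta r X^*}{K}+\frac{ahS^*}{(1+A^*)^2}+m\lambda^2X^*S^*$ and $\sigma_3=\frac{r a h S^*X^*}{K(1+A^*)^2}+m\eta\lambda^2X^*S^*$, i.e.\ a plus sign on the awareness term of $\sigma_3$ (and $r$, not $r^2$), so that $\sigma_3$ and $\sigma_1\sigma_2-\sigma_3=\frac{\eta rX^*}{K}\left(\frac{rX^*}{K}+\eta\right)+\frac{a\eta hS^*}{(1+A^*)^2}+\frac{m\lambda^2 {X^*}^{2}S^*r}{K}$ are unconditionally positive --- not the expressions displayed in the theorem, and incompatible with an ``if and only if'' reduction to \eqref{ChE} as stated. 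The paper's own proof reaches the stated $\sigma_i$ by writing the Jacobian with the entries $J_{24}=+\frac{hS^*}{(1+A^*)^2}$ and $J_{34}=-\frac{hS^*}{(1+A^*)^2}$ and reading the coefficients off \eqref{char_eqn} directly; your plan silently diverges from this at exactly the step you dismiss as routine. So, judged as a proof of the theorem as stated, the proposal has a genuine gap: the coefficient identification is unfinished, and the part you did carry out contradicts the statement; you would need either to complete the computation and reconcile the discrepancy explicitly, or to acknowledge that the $\sigma_i$ produced by your (correctly signed) linearization are not those in \eqref{eq:sigma:values}.
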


\begin{proof}
The Jacobian of system \eqref{delay1} without
delays at the coexistence state $E^*$ is
\[
J(X^*,S^*,I^*,A^*)
=\left[
\begin{array}{cccc}
r-\frac{2rX^*}{K}-\lambda S^*  & -\lambda X^* & 0& 0\\
m\lambda S^*& m\lambda X^*-\frac{hA^*}{1+A^*}-d & 0&\frac{hS^*}{(1+A^*)^2}\\
0& \frac{h A^*}{1+A^*} &-(d+\alpha)&-\frac{hS^*}{(1+A^*)^2} \\
0&a&0&-\eta
\end{array}
\right].
\]
The characteristic equation in $\xi$ for the Jacobian matrix $J(E^*)$ is given by
\[
|\xi I-J(E^*)|=0,
\]
that is,
\[
\left|
\begin{array}{cccc}
\xi-r+\frac{2rX^*}{K}+\lambda S^*  & \lambda X^* & 0& 0\\
-m\lambda S^*& \xi-m\lambda X^*+\frac{hA^*}{1+A^*}+d & 0&-\frac{hS^*}{(1+A^*)^2}\\
0&- \frac{h A^*}{1+A^*} &\xi+(d+\alpha)&\frac{hS^*}{(1+A^*)^2} \\
0&-a&0&\xi+\eta
\end{array}
\right|=0,
\]
which gives
\begin{eqnarray}
\label{char_eqn}
(\xi+d+\alpha)(\xi^3+\sigma_1\xi^2 +\sigma_2\xi + \sigma_3) = 0
\end{eqnarray}
with $\sigma_1$, $\sigma_2$ and $\sigma_3$ as in \eqref{eq:sigma:values}.
By the Routh--Hurwitz theorem, it follows that the coexistence equilibrium 
$E^*$ is locally asymptotically stable if, and only if, $\sigma_1>0$, $\sigma_2>0$, 
$\sigma_3>0$, and $\sigma_1\sigma_2-\sigma_3>0$. Due to the positivity of parameters, 
$\sigma_1$ and $\sigma_2$ are always positive, so the stability condition 
can be given by \eqref{ChE}.
\end{proof}
		
Hopf bifurcation of the endemic steady state can occur if the characteristic equation 
\eqref{char_eqn} has a pair of purely imaginary eigenvalues for some 
$\lambda=\lambda^*\in(0,\infty)$ and, additionally, all other eigenvalues 
have negative real parts \cite{CMT}. 
The characteristic equation \eqref{char_eqn} 
has one negative root, namely, $-(d+\alpha)$. Hopf bifurcation 
occurs according to the following theorem.

\begin{theorem}[Hopf bifurcation---undelayed case]
Let $\sigma_i$, $i = 1, 2, 3$, be given by \eqref{eq:sigma:values}.
System \eqref{delay1} with $\tau_1=\tau_2=0$, at the endemic equilibrium point $E^*$, 
undergoes Hopf bifurcation at $\lambda = \lambda^*$ in the following domain:
\[
\Lambda_{HB}
=\left\{\lambda \in\mathbb{R}^+:\sigma_1(\lambda^*)\sigma_2(\lambda^*)
-\sigma_3(\lambda^*)=0 \ \text{ with }\ \sigma_2>0,\,
\dot{\sigma}_3-(\dot{\sigma_1}\sigma_2+\sigma_1\dot{\sigma_2})\ne0\right\},
\]
where we use the 'dot' to denote differentiation with respect to $\lambda$.
\end{theorem}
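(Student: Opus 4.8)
The plan is to verify the three standard requirements for a Hopf bifurcation at $\lambda=\lambda^*$ for the cubic factor of the characteristic equation \eqref{char_eqn}, namely (a) the existence of a conjugate pair of purely imaginary roots at $\lambda=\lambda^*$, (b) the negativity of the real parts of every other root, and (c) the transversality (nonzero crossing speed) condition. Since \eqref{char_eqn} factors as $(\xi+d+\alpha)(\xi^3+\sigma_1\xi^2+\sigma_2\xi+\sigma_3)=0$, the root $-(d+\alpha)<0$ plays no role, and I would work throughout with the reduced cubic $P(\xi,\lambda)=\xi^3+\sigma_1\xi^2+\sigma_2\xi+\sigma_3$, regarding the $\sigma_i$ as the smooth functions of $\lambda$ given by \eqref{eq:sigma:values}.

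First I would pin down the imaginary pair. On $\Lambda_{HB}$ one has $\sigma_1\sigma_2-\sigma_3=0$, i.e. $\sigma_3=\sigma_1\sigma_2$, so the cubic factors exactly as
\[
P(\xi,\lambda^*)=(\xi+\sigma_1)(\xi^2+\sigma_2).
\]
Because $\sigma_2>0$ on $\Lambda_{HB}$, this yields the conjugate pair $\xi=\pm i\omega_0$ with $\omega_0=\sqrt{\sigma_2}$, establishing (a). For (b), the third root of the cubic is $-\sigma_1$, which is negative since $\sigma_1>0$ always holds by positivity of the parameters (as already noted in the proof of Theorem~\ref{thm:7}); together with $-(d+\alpha)<0$ this shows every remaining root of \eqref{char_eqn} has negative real part.

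For the transversality condition (c), I would treat the crossing root as a smooth branch $\xi=\xi(\lambda)$ with $\xi(\lambda^*)=i\omega_0$. This branch exists and is differentiable by the implicit function theorem, provided $i\omega_0$ is a simple root, which I would confirm by checking $\partial_\xi P(i\omega_0,\lambda^*)=3(i\omega_0)^2+2\sigma_1(i\omega_0)+\sigma_2=-2\sigma_2+2i\sigma_1\omega_0\neq0$ (true since $\sigma_2>0$). Differentiating $P(\xi(\lambda),\lambda)=0$ implicitly gives
\[
\frac{d\xi}{d\lambda}
=-\frac{\dot\sigma_1\xi^2+\dot\sigma_2\xi+\dot\sigma_3}{3\xi^2+2\sigma_1\xi+\sigma_2},
\]
and evaluating at $\xi=i\omega_0$, $\omega_0^2=\sigma_2$, and $\sigma_3=\sigma_1\sigma_2$, a short computation of real parts yields
\[
\mathrm{Re}\left.\frac{d\xi}{d\lambda}\right|_{\lambda=\lambda^*}
=\frac{2\sigma_2\big[\dot\sigma_3-(\dot\sigma_1\sigma_2+\sigma_1\dot\sigma_2)\big]}{4\sigma_2^2+4\sigma_1^2\sigma_2}.
\]
Since the denominator is strictly positive and $\sigma_2>0$, this real part is nonzero exactly when $\dot\sigma_3-(\dot\sigma_1\sigma_2+\sigma_1\dot\sigma_2)\neq0$, which is precisely the defining inequality of $\Lambda_{HB}$; this gives (c) and completes the argument.

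I expect the only delicate points to be bookkeeping rather than conceptual. The main obstacle is the real-part extraction in (c): one must carefully rationalize the complex quotient and use the on-bifurcation relations $\omega_0^2=\sigma_2$ and $\sigma_3=\sigma_1\sigma_2$ to collapse the expression into the stated combination $\dot\sigma_3-(\dot\sigma_1\sigma_2+\sigma_1\dot\sigma_2)$; any algebraic slip there would hide the clean dependence on the transversality factor. A secondary point worth stating explicitly is the justification of the smooth eigenvalue branch via simplicity of $i\omega_0$, which is what legitimizes differentiating the characteristic equation in the first place.
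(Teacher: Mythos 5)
Your proposal is correct and follows essentially the same route as the paper: factor the cubic at $\sigma_1\sigma_2-\sigma_3=0$ into $(\xi+\sigma_1)(\xi^2+\sigma_2)$ to obtain the pair $\pm i\sqrt{\sigma_2}$, then establish transversality by implicit differentiation of the characteristic polynomial with respect to $\lambda$, arriving at the same real-part expression $\frac{\dot{\sigma}_3-(\dot{\sigma}_1\sigma_2+\sigma_1\dot{\sigma}_2)}{2(\sigma_1^2+\sigma_2)}$. Your write-up is in fact slightly more careful than the paper's (you keep the minus sign in $\frac{d\xi}{d\lambda}=-\frac{\partial_\lambda P}{\partial_\xi P}$, which the paper's intermediate display drops as an apparent typo, and you justify the smooth eigenvalue branch via simplicity of $i\omega_0$ and check the remaining roots), but the substance coincides.
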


\begin{proof}
Using the condition $\sigma_1\sigma_2-\sigma_3=0$, 
the characteristic equation \eqref{char_eqn} becomes
\[
(\xi^2+\sigma_2)(\xi+\sigma_1)=0,
\]
which has three roots: $\xi_1=i\sqrt{\sigma_2}$, $\xi_2=-i\sqrt{\sigma_2}$, 
and $\xi_3=-\sigma_1$. Therefore, a pair of purely 
imaginary eigenvalues exists for $\sigma_1\sigma_2-\sigma_3=0$. 
Now, we verify the transversality condition. Differentiating the 
characteristic equation \eqref{char_eqn} with respect to $\lambda$, 
we have
\begin{eqnarray*}
\frac{d\xi}{d\lambda}
&=&{\frac{\xi^2\dot{\sigma_1}+\xi\dot{\sigma_2}
+\dot{\sigma_3}}{3\xi^2+2\xi\sigma_1+\sigma_2}}|_{\xi=i\sqrt{\sigma_2}}\\
&=&\frac{\dot{\sigma_3}-(\dot{\sigma_1}\sigma_2+\sigma_1
\dot{\sigma_2})}{2(\sigma_1^2+\sigma_2)}+i\left[\frac{\sqrt{\sigma_2}
(\sigma_1\dot{\sigma_3}+\sigma_2\dot{\sigma_2}
-\sigma_1\dot{\sigma_1}\sigma_2)}{2\sigma_2(\sigma_1^2+\sigma_2)} \right].
\end{eqnarray*}
Therefore,
\[
\left.\frac{dRe \xi}{d\lambda}\right|_{\lambda=\lambda^*}=\frac{\dot{\sigma_3}
-(\dot{\sigma_1}\sigma_2+\sigma_1 \dot{\sigma_2})}{2(\sigma_1^2
+\sigma_2)}\ne 0
\iff 
\dot{\sigma_3}-(\dot{\sigma_1}\sigma_2
+\sigma_1 \dot{\sigma_2}\ne 0
\]
and Hopf bifurcation occurs at $\lambda=\lambda^*$.
\end{proof}

\begin{remark}
When the parameter $\lambda$ crosses the critical value $\lambda^*$, 
a limit cycle of system \eqref{delay1} occurs around $E^*$.
\end{remark}

% ----------------------------------------------------

\section{Stability and Hopf Bifurcation of the Delayed System}
\label{sec4.2}

In this section, we investigate stability and Hopf bifurcation 
for the delayed system \eqref{delay1}. Without loss of generality, 
it is assumed that $E^* = (X^*, S^*, I^*, A^*)$ 
is the interior equilibrium point of the system with delays.
In the sequel, we define $\tau$ as the sum of the two delays 
of the system, that is,
$$
\tau := \tau_1+\tau_2.
$$ 

\begin{theorem}[Local stability of the coexistence equilibrium---delayed case]
\label{prop4}
Let $a_i$, $i=1,\ldots,5$, be given by \eqref{eq:val:a:i}. Define
$S_1:=a_1^2-2a_2$, $S_2:=a_2^2-2a_1a_5-a_4^2$, and $S_3:=a_5^2-a_3^2$.
If the conditions
\begin{enumerate}
\item[(i)] $a_1>0$, $a_3+a_5>0$, and $a_1(a_2+a_4)-(a_3+a_5)>0$;
\item[(ii)] $S_1\geq 0$, $S_3\geq 0$, and $S_2>0$;
\end{enumerate}
are satisfied, then the infected steady state $E^*$
is locally asymptotically stable for all $\tau \geq 0$.
\end{theorem}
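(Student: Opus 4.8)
The plan is to prove \emph{delay-independent} stability, exploiting the fact that in the characteristic equation \eqref{3} the two delays enter only through their sum $\tau=\tau_1+\tau_2$. Since the cofactor $(\xi+d+\alpha)$ contributes only the negative root $\xi=-(d+\alpha)$, it suffices to locate the roots of the transcendental cubic \eqref{3}. The argument splits into two ingredients: (a) show that $E^*$ is locally asymptotically stable in the non-delayed limit $\tau=0$, and (b) show that \eqref{3} admits no purely imaginary root for any $\tau\geq0$. Because the roots of \eqref{3} depend continuously on $\tau$ and, the polynomial part having degree three while the delayed part has degree one, only finitely many roots can lie in any right half-plane, a root can migrate into $\{\operatorname{Re}\xi>0\}$ only by first crossing the imaginary axis. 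Ruling out such crossings, together with stability at $\tau=0$, then yields stability for all $\tau\geq0$.

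For ingredient (a), I would set $\tau=0$ in \eqref{3}, which collapses to the ordinary cubic $\xi^3+a_1\xi^2+(a_2+a_4)\xi+(a_3+a_5)=0$. Applying the Routh--Hurwitz criterion to this cubic gives asymptotic stability precisely when $a_1>0$, $a_3+a_5>0$, and $a_1(a_2+a_4)-(a_3+a_5)>0$, which is exactly condition~(i). In particular $\xi=0$ is never a root, since substituting $\xi=0$ into \eqref{3} leaves $a_3+a_5>0$ for every $\tau$.

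For ingredient (b), I would seek a crossing root $\xi=i\omega$ with $\omega>0$ (its conjugate $-i\omega$ is automatically a root, the coefficients being real). Substituting into \eqref{3} and separating real and imaginary parts gives two relations expressing $a_3\cos\omega\tau+a_4\omega\sin\omega\tau$ and $a_4\omega\cos\omega\tau-a_3\sin\omega\tau$ in terms of $\omega,a_1,a_2,a_5$. Squaring and adding eliminates $\tau$ and, writing $z=\omega^2>0$, produces $z^3+S_1z^2+S_2z+S_3=0$ with $S_1=a_1^2-2a_2$, $S_2=a_2^2-2a_1a_5-a_4^2$, $S_3=a_5^2-a_3^2$. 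Condition~(ii), namely $S_1\geq0$, $S_2>0$, $S_3\geq0$, forces every term of this cubic to be nonnegative with at least one strictly positive for $z>0$; hence it has no positive real root, so no real $\omega>0$ solves the crossing relations and \eqref{3} has no purely imaginary root for any $\tau$.

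The main obstacle is not the algebra but the justification of the continuity/finiteness argument that converts ``(a) plus no crossings'' into stability for all $\tau\geq0$. One must confirm that the eigenvalues of the retarded system vary continuously with $\tau$ and that no root can enter the right half-plane ``from infinity''; this holds because the degree of the polynomial multiplying the exponential is strictly smaller than the degree of the free polynomial part, so the number of roots with $\operatorname{Re}\xi\geq0$ is finite and can change only through imaginary-axis crossings. Granting this standard fact from the theory of delay differential equations, the combination of ingredients (a) and (b) completes the proof.
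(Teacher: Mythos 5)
Your proposal is correct and follows essentially the same route as the paper: substitute $\xi=i\omega$ into \eqref{3}, separate real and imaginary parts, square and add to obtain the cubic $z^3+S_1z^2+S_2z+S_3=0$ in $z=\omega^2$, and use condition~(ii) to exclude positive roots, hence imaginary-axis crossings. In fact you are somewhat more complete than the paper, which stops at the no-positive-root conclusion and leaves implicit both the Routh--Hurwitz verification at $\tau=0$ via condition~(i) and the continuity argument converting ``stable at $\tau=0$ plus no crossings'' into stability for all $\tau\geq0$.
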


\begin{proof}
For the stability of the endemic equilibrium, the distribution of the roots 
of \eqref{3} needs to be analyzed. 
The characteristic equation \eqref{3} becomes
\begin{equation}
\label{char3}
\psi(\xi,\tau)=\xi^3+a_1\xi^2+a_2\xi+a_5+[a_3+ a_4\xi]e^{-\xi\tau}=0.
\end{equation}
Equation \eqref{char3} is a transcendental equation in $\xi$ with infinitely many roots. 
It is known that the coexistence equilibrium $E^*$ is locally stable (or unstable) 
if all the roots of the corresponding characteristic equation have negative real parts 
(or have positive real parts). Suppose $\xi = i w( \tau)$ is a root 
of equation (\ref{char3}). Then,
\begin{equation}
\label{2.10}
-iw^3-a_1w^2+ia_2w+(a_4iw+a_3)(\cos{w\tau}-i\sin{w\tau})+a_5=0.
\end{equation} 	
Separating the real and imaginary parts, we obtain the following equations:
\begin{equation}	
\label{2.11}
w^3-a_2w=a_4w\cos{w\tau}-a_3\sin{w\tau},
\end{equation}
\begin{equation}	
\label{2.12}
a_1w^2-a_5=a_3\cos{w\tau}+a_4w\sin{w\tau}.
\end{equation}
Squaring and adding the real and imaginary parts, we get
\begin{equation}	
\label{2.13}
w^6+(a_1^2-2a_2)w^4+(a_2^2-2a_1a_5-a_4^2)w^2+(a_5^2-a_3^2)=0.
\end{equation}
Let $\theta=w^2$. Then, equation (\ref{2.13}) becomes
\begin{equation}	
\label{2.14}
H(\theta)=\theta^3+S_1\theta^2+S_2\theta+S_3=0.
\end{equation}
Now, if the following conditions
\begin{equation*}
S_1=a_1^2-2a_2\geq 0,
\quad S_2=a_2^2-2a_1a_5-a_4^2>0,
\quad S_3=a_5^2-a_3^2\geq 0
\end{equation*}
are satisfied, then equation (\ref{2.14}) has no positive real roots.
We get from (\ref{2.14}) that
\begin{equation} 	
\label{2.15}
\frac{dH(\theta)}{d\theta}=3\theta^2+2S_1\theta+S_2=0,
\end{equation}
which has two roots
\begin{equation}	
\label{2.16}
\theta_{1}=\frac{-S_1+\sqrt{S_1^2-3S_2}}{3},
\quad \theta_{2}=\frac{-S_1-\sqrt{S_1^2-3S_2}}{3}.
\end{equation}
Since we assume $S_2>0$, we have $\sqrt{S_1^2-3S_2}<S_1$
and hence neither $\theta_1$ nor $\theta_2$ is positive.
Thus, equation (\ref{2.15}) does not have any positive roots.
Since $F(0)=S_3\geq 0$, we conclude that equation (\ref{2.14}) 
has no positive roots. 
\end{proof}
	
\begin{remark}
The parameters in Table~\ref{tab3.1}
satisfy the conditions of Theorem~\ref{prop4}. Thus,
the steady state $E^*$ of the delayed model \eqref{delay1}
is asymptotically stable for all $\tau>0$, i.e.,
the stability of the system at the coexistence equilibrium 
is delay independent.
\end{remark}
	
A Hopf-bifurcating periodic solution appears
when purely imaginary roots exist. We shall now
check the possible occurrence of Hopf-bifurcation.
	
If $S_3<0$, then there exists a positive root $\theta_0$ for which the characteristic
equation \eqref{3} has pair of purely imaginary roots $\pm iw_0$.
Equation (\ref{2.14}) satisfies $H(0)<0$ and
$\lim_{\theta\rightarrow\infty} H(\theta)=\infty$. Thus, equation (\ref{2.14})
has at least one positive root, $\theta_0$. Again, if $S_2<0$, then
$\sqrt{S_1^2-3S_2}>S_1$ and hence $\theta_1>0$. This implies that equation
(\ref{2.13}) possesses purely imaginary roots $\pm iw_0$.
For $w(\tau_0)=w_0$, equations (\ref{2.11}) and (\ref{2.12}) give
\begin{equation}	
\label{2.17}
\tau_{n}=\frac{1}{w_0}{\arccos
\left[\frac{(a_4{w_0}^4)-(a_2a_4-a_1a_3){w_0}^2-a_3a_5}
{a_4^2w_0^2+a_3^2}\right]}  +\frac{2n\pi}{w_0},
\quad n=0, 1, 2, \ldots,
\end{equation}
where 
\begin{equation}
\label{eq:w0}
w_{0}^2=\frac{-S_1+\sqrt{{S_1}^2-3S_2}}{3}.
\end{equation}
Therefore, the following result holds.
	
\begin{theorem}[Hopf bifurcation---delayed case]
\label{thm:H:b:dc}
Let $a_i$, $i=1,\ldots,5$, and $S_j$, $j=1,\ldots,3$,
be as in Theorem~\ref{prop4}. Suppose that the interior 
equilibrium point $E^*$ exists and $a_1>0$, $a_3+a_5>0$, 
and $a_1(a_2+a_4)-(a_3+a_5)>0$. Define
\begin{equation}
\label{2.18}
\tau_{0}:=\frac{1}{w_0}{\arccos
\left[\frac{(a_4{w_0}^4)-(a_2a_4-a_1a_3){w_0}^2-a_3a_5}
{a_4^2w_0^2+a_3^2}\right]},
\end{equation}
where $w_0$ is given by \eqref{eq:w0}.
If either $S_3<0$ or $S_3\geq0$ and $S_2<0$, 
then $E^*$ is asymptotically stable when $\tau<\tau_0$
and unstable when $\tau>\tau_0$.
When $\tau=\tau_0$, a Hopf bifurcation occurs: a family of periodic
solutions bifurcates at $E^*$ as $\tau$ passes through the critical
value $\tau_0$, provided the transversality condition
\begin{equation}	
P(w_0)R(w_0)-Q(w_0)S(w_0) > 0
\end{equation}
is satisfied with
\begin{eqnarray*}
P(w)&=&-3w^2+a_1+a_4\cos w\tau-\tau(a_3\cos w\tau+a_4w\sin w\tau),\\
Q(w)&=&-ca_1w+a_4\sin w\tau-\tau(a_3\sin w\tau-a_4w\cos w\tau),\\
R(w)&=&a_3w\sin w\tau-a_4w^2\cos w\tau,\\
S(w)&=&a_3w\cos w\tau+a_4w^2\sin w\tau.
\end{eqnarray*}
\end{theorem}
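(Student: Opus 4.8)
The plan is to analyze the roots of the transcendental characteristic equation \eqref{char3} as the delay $\tau$ varies, establishing first that $E^*$ is stable at $\tau=0$, then locating the critical value $\tau_0$ at which a pair of roots crosses the imaginary axis, and finally verifying that this crossing is transversal so that a genuine Hopf bifurcation takes place. First I would confirm stability in the undelayed case: setting $\tau=0$ in \eqref{char3} collapses it to $\xi^3+a_1\xi^2+(a_2+a_4)\xi+(a_3+a_5)=0$, and the hypotheses $a_1>0$, $a_3+a_5>0$, and $a_1(a_2+a_4)-(a_3+a_5)>0$ are exactly the Routh--Hurwitz conditions guaranteeing that all three roots lie in the left half-plane. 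This anchors the stability analysis at $\tau=0$ and provides the baseline from which a stability switch can occur.

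Next I would exploit the computation already carried out in the proof of Theorem~\ref{prop4}. Seeking a purely imaginary root $\xi=iw_0$ leads, after separating real and imaginary parts \eqref{2.11}--\eqref{2.12} and eliminating the trigonometric terms by squaring and adding, to the cubic \eqref{2.14} in $\theta=w^2$. The point is that the sign hypotheses now \emph{force} a positive root: if $S_3<0$ then $H(0)=S_3<0$ while $H(\theta)\to+\infty$, so by the intermediate value theorem there is a positive $\theta_0$; alternatively, if $S_3\ge 0$ but $S_2<0$, then $\sqrt{S_1^2-3S_2}>S_1$ makes the critical value $\theta_1$ in \eqref{2.16} positive and one checks it is a genuine sign-changing root. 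Either way we obtain $w_0=\sqrt{\theta_0}$ given by \eqref{eq:w0}, and solving \eqref{2.11}--\eqref{2.12} for $\cos w_0\tau$ yields the explicit critical delays \eqref{2.17}, with the smallest being $\tau_0$ in \eqref{2.18}. For $\tau<\tau_0$ no root has yet reached the axis, so by continuity of the roots in $\tau$ the equilibrium remains stable; for $\tau>\tau_0$ a root has crossed into the right half-plane and $E^*$ is unstable.

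The decisive step, and the one I expect to be the main obstacle, is the transversality condition $P(w_0)R(w_0)-Q(w_0)S(w_0)>0$, which certifies that at $\tau=\tau_0$ the imaginary-axis crossing happens with nonzero speed, $\left.\frac{d\,\mathrm{Re}\,\xi}{d\tau}\right|_{\tau=\tau_0}\neq 0$. The approach is to differentiate \eqref{char3} implicitly with respect to $\tau$, solve for $\frac{d\xi}{d\tau}$, and evaluate the sign of the real part of its inverse at $\xi=iw_0$. Writing $\psi(\xi,\tau)=p(\xi)+q(\xi)e^{-\xi\tau}$ with $p(\xi)=\xi^3+a_1\xi^2+a_2\xi+a_5$ and $q(\xi)=a_3+a_4\xi$, one gets
\[
\left(\frac{d\xi}{d\tau}\right)^{-1}
=\frac{p'(\xi)+q'(\xi)e^{-\xi\tau}-\tau q(\xi)e^{-\xi\tau}}{\xi\,q(\xi)e^{-\xi\tau}},
\]
and the auxiliary functions $P,Q,R,S$ are precisely the real and imaginary parts of numerator and denominator after substituting $\xi=iw_0$ and using $e^{-iw_0\tau}=\cos w_0\tau-i\sin w_0\tau$. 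The bulk of the work is the careful separation into real and imaginary parts and the algebraic simplification showing that the sign of $\frac{d\,\mathrm{Re}\,\xi}{d\tau}$ equals the sign of $P(w_0)R(w_0)-Q(w_0)S(w_0)$; invoking the standard Hopf bifurcation theorem then completes the proof. I would also double-check that the spurious factor $(\xi+d+\alpha)$ and the remaining roots keep negative real parts at $\tau_0$, so that the purely imaginary pair is the \emph{only} crossing and the hypotheses of the Hopf theorem are genuinely met.
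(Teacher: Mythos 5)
Your proposal is correct and follows essentially the same route as the paper: the existence of the purely imaginary root $\pm iw_0$ and the critical delays \eqref{2.17} come from the cubic \eqref{2.14} exactly as in the discussion preceding the theorem, and the transversality step is the same implicit differentiation of the characteristic equation (the paper differentiates the separated equations \eqref{2.11}--\eqref{2.12}, you differentiate $p(\xi)+q(\xi)e^{-\xi\tau}=0$ directly, which yields the identical sign criterion $P(w_0)R(w_0)-Q(w_0)S(w_0)>0$), followed by an appeal to the standard Hopf theorem. Your explicit Routh--Hurwitz check at $\tau=0$, anchoring stability for $\tau<\tau_0$, is a small completeness addition that the paper leaves implicit in the hypotheses $a_1>0$, $a_3+a_5>0$, $a_1(a_2+a_4)-(a_3+a_5)>0$, but it does not change the argument.
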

	
\begin{proof}
Differentiating (\ref{2.11}) and (\ref{2.12}) with respect to $\tau$, we have
\begin{equation}	
\frac{d}{d\tau}[Re \{\lambda(\tau)\}]_{\tau=\tau_0,w=w_0}
=\left.\frac{P(w)R(w)-Q(w)S(w)}{P^2(w)+Q^2(w)}\right|_{\tau=\tau_0,w=w_0}.
\end{equation}
Therefore, $\frac{d}{d\tau}[Re \{\lambda(\tau)\}]_{\tau=\tau_0,w=w_0}>0$
if $P(w_0)R(w_0)-Q(w_0)S(w_0) > 0$. Thus, the transversality condition
is satisfied and hence a Hopf bifurcation occurs at $\tau=\tau_0$ \cite{Buttler}.
\end{proof}
	
The full characterization of the direction and stability of the bifurcating 
periodic solution for the delayed system \eqref{delay1} is cumbersome
but can be done following the approach of \cite{IACM,KH18}. In Section~\ref{sec:6} 
we illustrate the nature of Hopf bifurcation through numerical simulations. 

% ------------------------------------------------

\section{Optimal Control}
\label{sec:5}
	
Optimal control with delays is a subject under strong current research
\cite{MR3259239,MR2970905,MyID:405}, in particular with respect to applications
in biological control systems \cite{MyID:400,MR3562914,MR4086474}.
Our main aim here is to find the optimal profile $u_1(t)$ and $u_2(t)$ 
to minimize the cost of biopesticides used in pest control.
We introduce the parameters $u_1(t)$ and $u_2(t)$ as control parameters: 
$u_1(t)$ is an admissible control representing the efficiency of pesticide 
being used and $u_2(t)$ characterizes the cost of the awareness campaign.
The control induced state system is given as follows:
\begin{equation}
\label{state1}
\begin{cases}
\frac{dX}{dt} = r X\left[1 - \frac{X}{K}\right] - \lambda XS,\\
\frac{dS}{dt} = m\lambda XS - (1-u_1)\frac{hA(t-\tau_2)S}{1+A(t-\tau_2)} -dS,\\
\frac{dI}{dt} = (1-u_1)\frac{hA(t-\tau_2)S}{1+A(t-\tau_2)} - (d+\alpha)I,\\
\frac{dA}{dt} = u_2\omega+aS(t-\tau_1) - \eta A,
\end{cases}
\end{equation}
subject to initial conditions \eqref{eq:ic}.
We want to minimize the pest and also the cost of pest management.
With this in mind, we define the cost functional for the minimization problem as
\begin{equation}
\label{1.6}
J(u_1,u_2)=\int^{t_f}_{0}[B_1u_1^2(t)+B_2u_2^2(t)+CS^2(t)]dt
\end{equation}
subject to the control system (\ref{state1}). The parameters $B_i$, $i=1,2$, 
represent a weight constant on the benefit of the cost of production, while $C$
is the penalty multiplier. Our aim is to find the optimal control 
$u^*(\cdot)=(u_1(\cdot), u_2(\cdot))$ such that
\begin{equation}
\label{eq:J:min}
J(u^*)= \min ~(J(u):u\in U_1\times U_2),
\end{equation}
where
\begin{equation}
\label{eq:U}
U_1\times U_2=\left\{u : ~u~\mbox{is measurable and}~
0\leq u_i(t)\leq 1,~t\in[0, t_f]\right\}.
\end{equation}
The Pontryagin Minimum Principle with delays in the state system
can be found in \cite{Gollmann,DT1} and references therein,
providing necessary optimality conditions for our delayed optimal control problem.
Roughly speaking, Pontryagin's Principle reduces problem \eqref{state1}--\eqref{eq:U}
to a problem of minimizing the Hamiltonian $H$ given by
\begin{equation}
\label{hami}
\begin{split}
H=B_1u_1^2&+B_2u_2^2+CS^2+\xi_1\left\{rX(1-{X}{K}^{-1}) - \lambda XS \right\}\\
&+\xi_2\left\{ m\lambda XS - (1-u_1)\frac{hA(t-\tau_2) S}{1+A(t-\tau_2)} -dS\right\}\\
&+\xi_3\left\{ (1-u_1)\frac{hA(t-\tau_2) S}{1+A(t-\tau_2)} - (d+\alpha)I\right\}
+\xi_4\left\{u_2\omega+aS(t-\tau_1) - \eta A \right\}.
\end{split}
\end{equation}
Precisely, the following theorem characterizes the solution of our delayed
optimal control problem.

\begin{theorem}[Necessary optimality conditions for the delayed optimal control problem]
\label{thm:PMP:ourOCP}
Let $u^*$ be solution to the optimal control problem \eqref{state1}--\eqref{eq:U}
and $\left(X^*, S^*, I^*, A^*\right)$ be the corresponding (optimal) state.
Then there exist multipliers $\xi_1$, $\xi_2$, $\xi_3$ and $\xi_4$
that satisfy the adjoint system
\begin{equation}
\label{costate2}
\begin{cases}
\frac{d\xi_1}{dt}
= -\xi_1[r(1- {2X^*}{K}^{-1}) -\lambda S^*]-\xi_2m\lambda S^*,\\
\frac{d\xi_2}{dt}=
-2CS^*+\lambda X^*\xi_1+\left((1-u_1)\frac{hA^*(t-\tau)}{1
+A^*(t-\tau)}-m\lambda X^*+d\right)\xi_2\\
\quad -\left((1-u_1)\frac{hA^*(t-\tau)}{1+A^*(t-\tau)}\right)\xi_3
+\chi_{[0,t_f-\tau]}(t)a\xi_4(t+\tau),\\
\frac{d\xi_3}{dt}=(d+\alpha)\xi_3,\\
\frac{d\xi_4}{dt}
=\xi_4\eta+\chi_{[0,t_f-\tau]}(t)(1-u_1)\frac{hS^*(t+\tau)}{(1
+A^*)^2}\left[\xi_2(t+\tau)-\xi_3(t+\tau)\right],
\end{cases}
\end{equation}
where $\tau=\max\{\tau_1,\tau_2\}$, subject
to the transversality conditions $\xi_i(t_f)=0$, $i=1, \ldots, 4$.
Moreover, the optimal control is given by
\begin{equation}
\label{1.13}
\begin{gathered}
u_1^*(t)=\max\left\{0,~\min\left\{1,~\frac{hA^*(t-\tau)S^*(t)
(\xi_3(t)-\xi_2(t))}{2B_1 (1+A^*(t-\tau))}\right\}\right\},\\
u_2^*(t)=\max\left\{0,~\min\left\{1,~-\frac{\xi_4\omega}{2B_2}\right\}\right\}.
\end{gathered}
\end{equation}
\end{theorem}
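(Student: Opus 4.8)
The plan is to apply the Pontryagin Minimum Principle for optimal control problems with time delays in the state variables, as developed in \cite{Gollmann,DT1}. Since the theorem asserts only necessary conditions, I would take as given that an optimal control $u^*=(u_1^*,u_2^*)$ exists together with its associated optimal trajectory $(X^*,S^*,I^*,A^*)$; existence itself follows from a standard Filippov--Cesari argument, the admissible set $U_1\times U_2$ being convex and compact, the dynamics of \eqref{state1} being affine in the controls, and the integrand of \eqref{1.6} being convex in $(u_1,u_2)$. With existence secured, the Hamiltonian $H$ of \eqref{hami} is formed exactly as displayed, pairing each state equation with its costate $\xi_i$.

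First I would derive the adjoint system. For delayed problems the costate equations take the form $\dot\xi_i(t)=-\partial H/\partial x_i(t)-\chi_{[0,t_f-\tau]}(t)\,\partial H/\partial x_{i,\tau}(t+\tau)$, where $x_{i,\tau}$ denotes the occurrence of the $i$-th state with a delayed argument and the characteristic function suppresses the advance term on the final interval where $t+\tau>t_f$. Reading off the partial derivatives of $H$: the biomass $X$ and the infected pest $I$ enter only through non-delayed terms, so $\xi_1$ and $\xi_3$ obey ordinary backward equations with no advance contribution, yielding the first and third lines of \eqref{costate2}. The susceptible pest appears both non-delayed (in the $X$, $S$, $I$ equations and the running cost $CS^2$) and delayed as $S(t-\tau_1)$ in the $A$ equation, so differentiation produces the advance term $\chi_{[0,t_f-\tau]}(t)\,a\,\xi_4(t+\tau)$ in the $\xi_2$ equation. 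Symmetrically, $A$ occurs delayed as $A(t-\tau_2)$ in the $S$ and $I$ equations through the saturated infection term, so $\xi_4$ picks up the advance contribution $\chi_{[0,t_f-\tau]}(t)(1-u_1)\frac{hS^*(t+\tau)}{(1+A^*)^2}[\xi_2(t+\tau)-\xi_3(t+\tau)]$. Because the cost functional \eqref{1.6} carries no terminal payoff and the terminal states are free, the transversality conditions are $\xi_i(t_f)=0$ for $i=1,\dots,4$.

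Next I would characterize the optimal control by minimizing $H$ pointwise in $(u_1,u_2)$ over $U_1\times U_2$. Since $H$ is quadratic and strictly convex in each control, the unconstrained stationarity conditions $\partial H/\partial u_1=0$ and $\partial H/\partial u_2=0$ supply the interior candidates. Collecting the $u_1$-dependent terms gives $\partial H/\partial u_1=2B_1u_1+(\xi_2-\xi_3)\frac{hA(t-\tau_2)S}{1+A(t-\tau_2)}$, whence $u_1=\frac{hA^*(t-\tau)S^*(\xi_3-\xi_2)}{2B_1(1+A^*(t-\tau))}$; similarly $\partial H/\partial u_2=2B_2u_2+\xi_4\omega$ yields $u_2=-\frac{\xi_4\omega}{2B_2}$. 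Enforcing the box constraints $0\le u_i\le1$ then produces the projection formulas \eqref{1.13} through the usual $\max\{0,\min\{1,\cdot\}\}$ clipping, completing the derivation.

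The main obstacle I anticipate is the correct bookkeeping of the delayed terms in the adjoint system: one must track precisely which state variables enter the dynamics with a delayed argument and translate each such occurrence into an advance term evaluated at $t+\tau$ in the corresponding costate equation, while respecting the cutoff $\chi_{[0,t_f-\tau]}$ that truncates these advances near the horizon. Verifying that the shifted-argument adjoint system is well posed when integrated backward from $t=t_f$, and that the stationarity of $H$ indeed delivers a minimum rather than a maximum---guaranteed here by the positivity of $B_1,B_2$, which makes $H$ convex in the controls---constitutes the technical heart of the argument; everything else reduces to routine differentiation of \eqref{hami}.
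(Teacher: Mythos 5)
Your proposal is correct and follows essentially the same route as the paper: a direct application of the Pontryagin Minimum Principle with delays (the same references \cite{Gollmann,DT1}), reading off the adjoint system from the partial derivatives of the Hamiltonian \eqref{hami} with the advance terms $\chi_{[0,t_f-\tau]}(t)\,\partial H/\partial(\cdot)(t+\tau)$ for the delayed occurrences of $S$ and $A$, the transversality conditions $\xi_i(t_f)=0$, and then the stationarity conditions $\partial H/\partial u_i=0$ projected onto $[0,1]$ to obtain \eqref{1.13}. Your additional remarks on existence (Filippov--Cesari) and on convexity of $H$ in the controls are not in the paper's proof but are harmless extras, since the theorem already assumes an optimal $u^*$ exists.
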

	
\begin{proof}
The result is a direct consequence of the Pontryagin Minimum Principle with delays,
which asserts that the solution to the delayed optimal control problem
satisfies the adjoint equations
\begin{equation}
\label{1.9}
\begin{split}
\frac{d\xi_1}{dt}(t)&=-\frac{\partial H}{\partial X}(t),
\quad \frac{d\xi_2}{dt}(t)=-\frac{\partial H}{\partial S}(t)
-\chi_{[0,~t_f-\tau]}(t)\frac{\partial H}{\partial S}(t+\tau),\\
\frac{d\xi_3}{dt}(t)&=-\frac{\partial H}{\partial I}(t),
\quad \frac{d\xi_4}{dt}(t)=-\frac{\partial H}{\partial A}(t)
-\chi_{[0,~t_f-\tau]}(t)\frac{\partial H}{\partial A}(t+\tau),
\end{split}
\end{equation}
and the transversality conditions $\xi_i(t_f)=0$, $i=1,~2,~3,~4$. Moreover,
according to the Pontryagin Minimum Principle, when the optimal
control $u^*$ takes values on $(0,1)$, then
$\frac{\partial H}{\partial u_i}(t)=0,i=1,2$.
Now, taking partial differentiation of \eqref{hami} 
with respect to $u_1$ and $u_2$, we get
\begin{equation}
u_1^*(t)=\frac{(\xi_3(t)-\xi_2(t))hA^*(t-\tau)S^*(t)}{2B_1(1+A^*(t-\tau))},
\quad u_2^*(t)=\frac{-\xi_4(t)\omega}{2B_2}.
\end{equation}
Using the boundedness of the controls, i.e., the fact that admissible
controls take values such that $0 \leq u_i(t) \leq 1, i=1,2$, it follows
from the minimality condition of Pontryagin's Principle that
\begin{align*}
&u_1^*(t)=\max\left\{0,~\min\left\{1,~\frac{hA^*(t-\tau)S^*(t)
(\xi_3(t)-\xi_2(t))}{2B_1 (1+A^*(t-\tau))}\right\}\right\},\\
&u_2^*(t)=\max\left\{0,~\min\left\{1,~-\frac{\xi_4(t)\omega}{2B_2}\right\}\right\}.
\end{align*}
The proof is complete.
\end{proof}
	
State system (\ref{state1}) subject to \eqref{eq:ic}
is an initial value problem while the co-state system (\ref{costate2})
subject to the transversality conditions $\xi_i(t_f)=0$, $i=1, \ldots, 4$,
is a terminal value problem. For this reason, one solves numerically
the boundary value problem given by Theorem~\ref{thm:PMP:ourOCP}
through forward iteration in the state system
while the co-state system (\ref{costate2}) is integrated
through backward iteration \cite{MR4091761}.
Numerical results are given in Section~\ref{sec:ns:oc}.
	
% -------------------------------------------
	
\section{Numerical Simulations}
\label{sec:6}
	
In this section, we provide numerical simulations obtained
from the application of our analytical results, as given
in previous sections. To illustrate the behavior of
model (\ref{delay1}), we did numerical simulations
with the set of parameter values in Table~\ref{tab3.1}.
Some of these values are estimated from \cite{Bhatta2,mma,ns}.
We begin by simulating the system without delays, then with delays.
	
% -------------------
	
\subsection{Numerical simulations of the stability of equilibria}
	
The parameters of model \eqref{delay1} used in the numerical
simulations are given in Table~\ref{tab3.1}. The initial values
are chosen as
\begin{equation}
\label{initial}
(X(\phi), S(\phi), I(\phi), A(\phi)) = (0.02, 0.12, 0.02, 0.35),
\quad \phi\in [-\tau, 0],
\end{equation}
where $\tau=\max\{\tau_1,\tau_2\}$.
% ---------------------------------
\begin{figure}[H]
\centering
\includegraphics[scale=0.70]{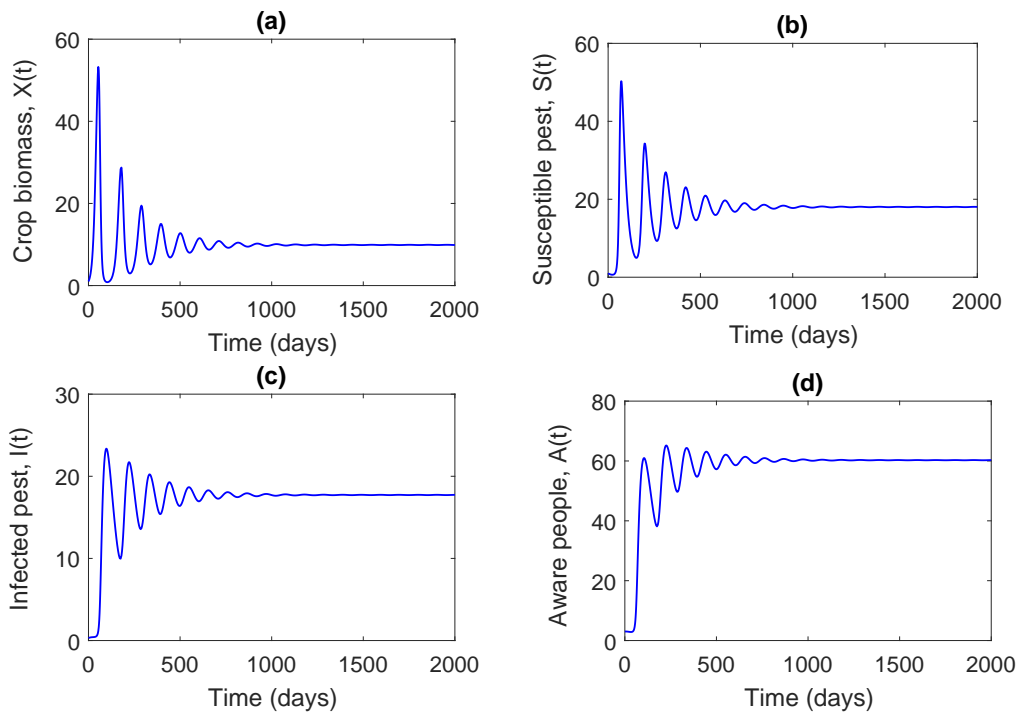}
\caption{Stability behavior of the endemic state
for	the non-delayed population system \eqref{delay1}, i.e.,
when $\tau_1=0$ and $\tau_2=0$.}
\label{fig:1}
\end{figure}
% ---------------------------------
When we choose $\tau_1=0$ and $\tau_2=0$, the behavioral patterns
of the densities of susceptible pest, infected pest, crop biomass,
and aware population are presented taking the parameters from Table~\ref{tab3.1}.
All the system populations oscillate initially and finally become asymptotically stable
and converge to the endemic state value (see Figure~\ref{fig:1}). Here,
it is to be noted that all conditions of Theorem~\ref{thm:7}
are satisfied and thus the coexistence equilibrium $E^*$ is stable.

Figure~\ref{fig:2} contains the contour plot of the basic reproduction number $R_0$, 
given by Theorem~\ref{thm:R0}, as a function of the awareness active rate $h$ and global 
awareness rate $\omega$. The region where $E^*$ exists and $E_1$ is stable
are identified. We see that if both $h$ and $\omega$ are large, then the coexistence
equilibria $E^*$ will not be feasible but the pest-free equilibrium $E_1$ is feasible 
and stable as $R_0<1$ there. Consequently, a transcritical bifurcation occurs at $R_0=1$. 
This is also clear from Figure~\ref{fig:3}.
% ---------------------------------
\begin{figure}[H]
\centering
\includegraphics[scale=0.70]{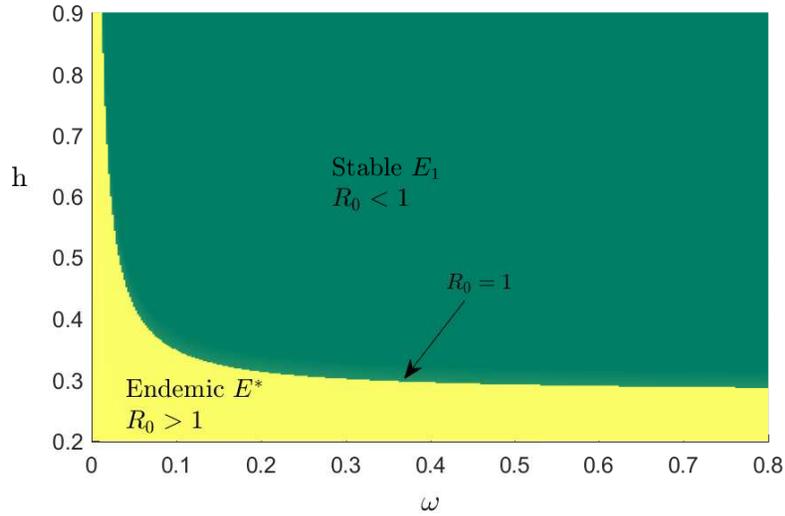}
\caption{Plot of the basic reproduction number $R_0$ as a function of
$h$ and $\omega$. All other parameters are as in Table~\ref{tab3.1}.}
\label{fig:2}
\end{figure}
% ---------------------------------
\begin{figure}[H]
\centering
\includegraphics[scale=0.7]{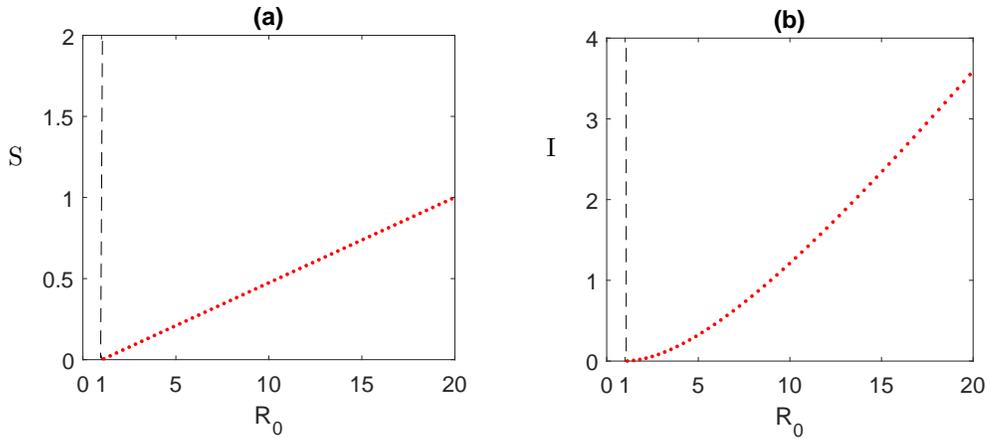}
\caption{Forward bifurcation. Plot of the basic reproduction number $R_0$ 
as a function of $S$ and $I$. All other parameters are as in Table~\ref{tab3.1}. 
Coexisting equilibrium $E^*$ exists for $R_0>1$ and for $R_0<1$ it is not 
feasible but pest-free equilibrium $E_1$ is stable.}
\label{fig:3}
\end{figure}
% ---------------------------------
Figure~\ref{fig:4} displays the effect of local awareness rate $a$
on the steady state values of the model populations (without delay). As expected, 
we see that awareness of population is increased as the value of $a$ increases 
and the uninfected pest population decreases.
% ----------------------------------------------
\begin{figure}[H]
\centering
\includegraphics[scale=0.77]{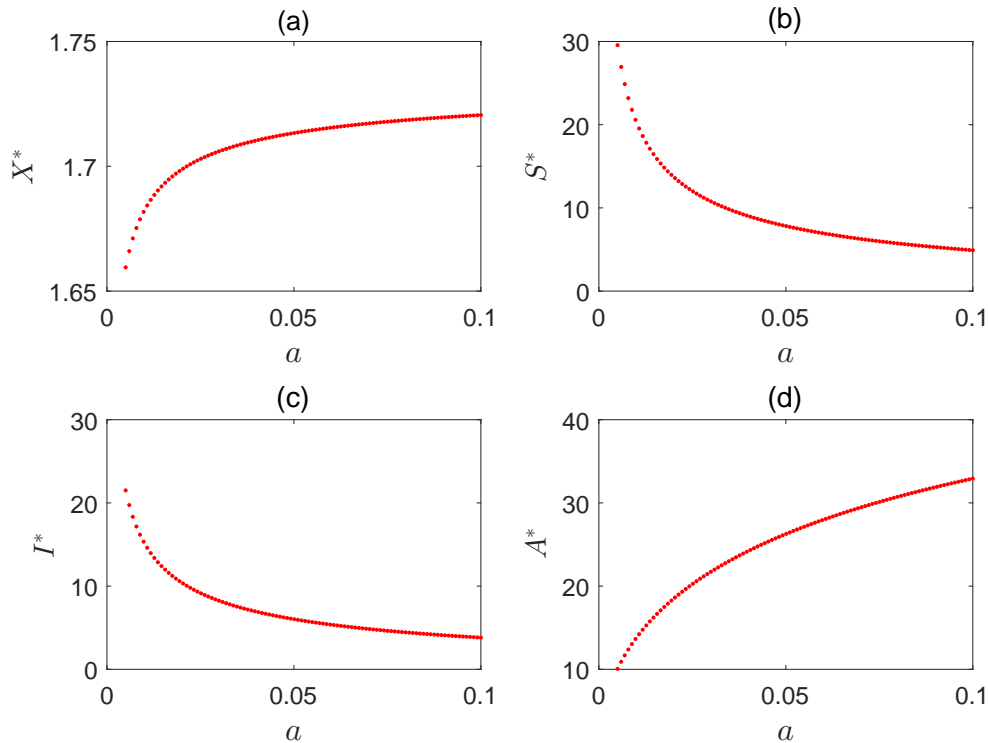}
\caption{Effect of local awareness, $a$, on the nondelayed system \eqref{delay1} 
with values of the other parameters given in Table~\ref{tab3.1}.}\label{fig:4}
\end{figure}
	
% ----------------------------------------------
	
\subsection{Numerical simulations with different time delays}
	
In this part, we examine the influence of the time delays on the number 
of infected pest by performing some numerical simulations. 

When the time delay $\tau_2 = 0$ is fixed and $\tau_1$ 
takes values $\tau_1 = 6$ and $\tau_1 = 16$, 
respectively, we observe that oscillation increases as 
$\tau_1$ increases and periodic oscillation is seen at 
$\tau_1=16$ days (see Figure~\ref{fig:5}). 
Stability switch occurs through Hopf bifurcation.

In Figure~\ref{fig:6}, the time delay $\tau_1 = 0$ is fixed and $\tau_2$ 
takes value $\tau_2 = 16$. We see a similar result as the one of Figure~\ref{fig:5}. 

For any combination of $\tau_1$ and $\tau_2$, 
when $\tau_1+\tau_2=16$, periodic oscillation (Hopf bifurcation) 
can be seen. Figure~\ref{fig:7} confirms that the Hopf bifurcation 
is of supercritical type.

Figures~\ref{fig:5}, \ref{fig:6} and \ref{fig:7} indicate that the populations always
oscillate initially and ultimately approach towards their equilibrium values when
$\tau_1<\tau_0=16$. This indicates that the number of pest will be high and sometimes low.
If the value of delay, $\tau_i$, exceeds its critical value, $\tau_0=16$ days approximately,
the population becomes periodic. In this situation, it is very difficult to make the prediction
about the size of the epidemic. Hence, these figures indicate that the endemic equilibrium $E^*$
of model system \eqref{delay1} with delay is stable for $\tau < \tau_0$ and unstable
for $\tau > \tau_0$. For $\tau > \tau_0$, bifurcating periodic solutions are observed.
This is in agreement with Theorem~\ref{thm:H:b:dc}.

In Figure~\ref{fig:8}, the region of stability in the $\lambda$--$\tau_2$ parameter plane is shown. 
The critical value of $\tau_2$, for which Hopf bifurcation occurs, depends on the consumption 
rate $\lambda$. The critical value of $\tau_2$ decreases as $\lambda$ increases.

% -----------------------------
\begin{figure}[H]
\centering
\includegraphics[scale=0.7]{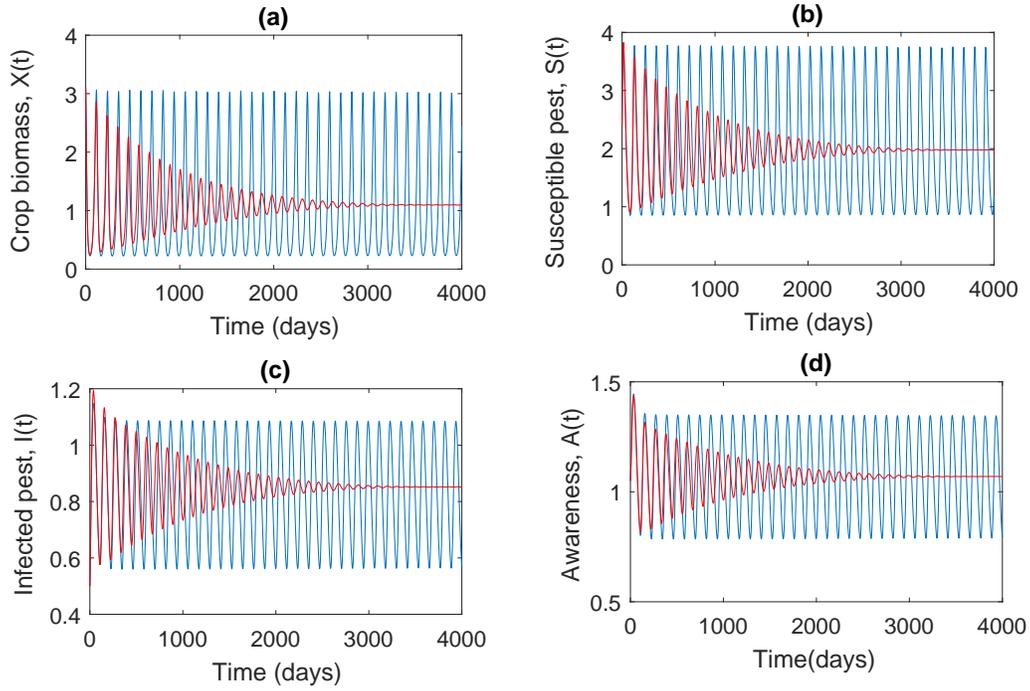}
\caption{The behavior of the model population for different values of $\tau_1$ 
and fixed $\tau_2=0$. Red line represents $\tau_1=6$ and blue line represents 
$\tau_1=16$. Other parameters are as in Figure~\ref{fig:1}.}\label{fig:5}
\end{figure}
% -----------------------------
\begin{figure}[H]
\centering
\includegraphics[scale=0.7]{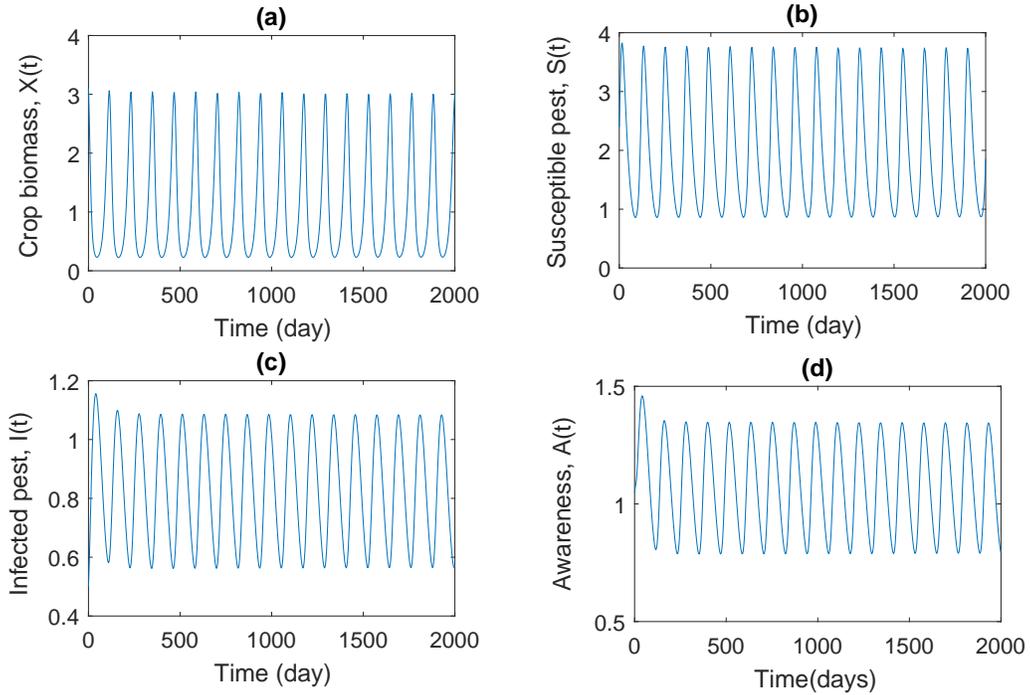}
\caption{The behavior of the model population for $\tau_2=16$ 
and fixed $\tau_1=0$.}\label{fig:6}
\end{figure}
% -----------------------------
\begin{figure}[H]
\centering
\includegraphics[scale=0.7]{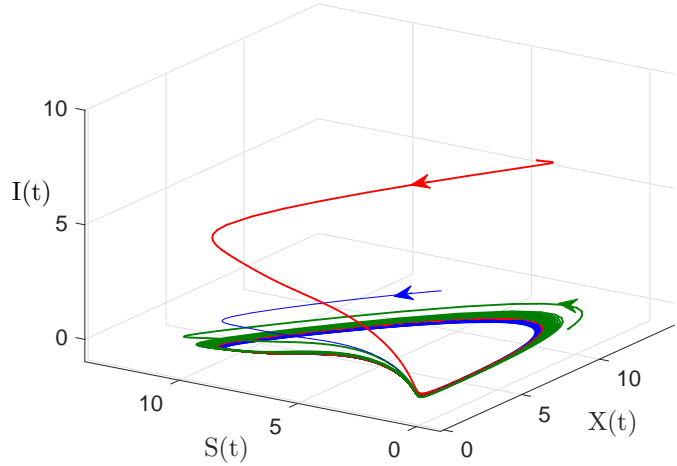}
\caption{Supercritical bifurcation. The behavior of the solution for different 
initial conditions and $\tau_1=16$. Phase portrait converges 
to the same limit cycle.}\label{fig:7}
\end{figure}
% -----------------------------
\begin{figure}[H]
\centering
\includegraphics[scale=0.7]{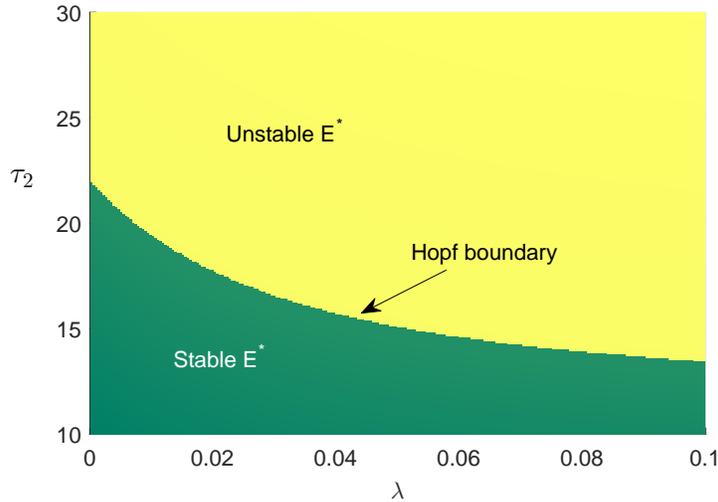}
\caption{Region of stability in the $\lambda$--$\tau_2$ parameter plane. 
Other parameters are as in Table~\ref{tab3.1}.}\label{fig:8}
\end{figure}

% -----------------------------

\subsection{Numerical simulations of optimal control}
\label{sec:ns:oc}
	
Finally, we solve numerically the optimality system 
(Theorem~\ref{thm:PMP:ourOCP}) and we display the
results found graphically. The parameters of model \eqref{state1} and \eqref{costate2}
are given in Table~\ref{tab3.1} and the initial values for \eqref{state1}
chosen as in \eqref{initial}. The weight constants of the objective functional
are selected, for illustrative purposes, as $B=\frac{1}{2}$, $B_2=\frac{1}{2}$ and $C=1$.
The optimal system \eqref{state1} has been solved numerically and the results
have been displayed diagrammatically. As already remarked at the end of
Section~\ref{sec:5}, this optimal system is a two-point boundary value problem (BVP)
with separated boundary conditions at times $t = 0$ and $t = t_f$.
An efficient technique to solve two-point BVPs numerically is the
collocation method \cite{hassan,MyID:433}. A suitable collocation code 
is given by solver \texttt{bvp4c} of the \textsf{MATLAB} numerical 
computing environment, which can be used to solve nonlinear two-point BVPs. 
Solving our system requires an iterative scheme developed by \cite{hattaf}.
This involves the use of an appropriate algorithm.
Let $h>0$ be the discretization step size and let us consider
integers $(n,m)\in\mathbb{N}^2$ with $\tau=mh$ and $t_f=nh$.
In our programming we consider $m$ knots to the left of $0$
and to the right of $t_f$, and we obtain the following partition:
\[
\Delta=(t_{-m}=-\tau<\cdots<t_{-1}<0<t_1<\cdots<t_n=t_f<\cdots< t_{n+m}).
\]
Then, we have $t_i=ih$, $-m\leq i\leq n+m$.
Next we define the state and adjoint variables
$X(t)$, $S(t)$, $I(t)$, $A(t)$, $\xi_1(t)$, $\xi_2(t)$, $\xi_3(t)$, $\xi_4(t)$, and control $u(t)$
in terms of the nodal points. Now, a combination of forward and backward difference approximations
is applied to obtain the solutions. To demonstrate the consequences numerically, we select the set 
of parameters of Table~\ref{tab3.1}.
	
The optimal control function $u^*=(u_1^*,u_2^*)$ 
is intended in such a way that it minimizes the cost functional 
given by \eqref{1.6}. In Figure~\ref{fig:9}, we present the numerical solutions of the population 
class with control. We operate the control through insecticide spraying up to $100$ days.
Crop biomass increased and pest population decreased significantly with the influence of the optimal 
profiles of global awareness and awareness based control activity, as shown in Figure~\ref{fig:9}. 
It is also observed that susceptible pest population goes to extinction within
first $90$ days due to effort of optimal control. Thus, optimal control by means of awareness-based 
biocontrol is of great value in controlling a pest problem is a crop field.

The Pontryagin controls are shown in Figure~\ref{fig:10}.
We see they are bang-bang controls.

% ------------------------------------
\begin{figure}[H]
\centering
\includegraphics[scale=0.67]{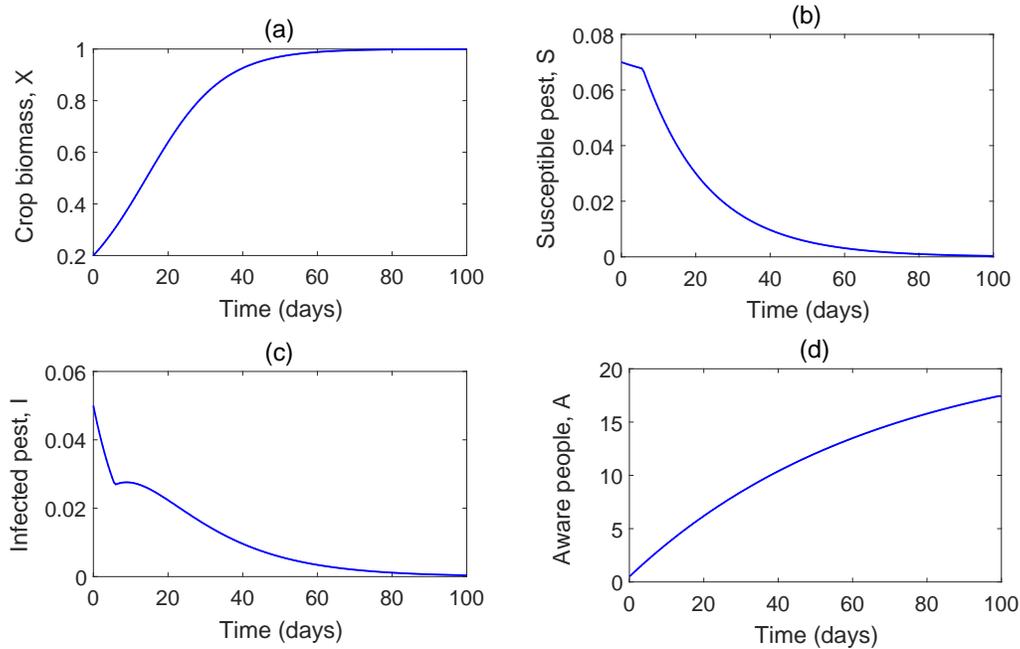}
\caption{Numerical solution of the optimal control problem with
$\tau_1=0$, $\tau_2=40$, and remaining parameter values from Table~\ref{tab3.1}.}
\label{fig:9}
\end{figure}
% ------------------------------------
\begin{figure}[H]
\centering
\includegraphics[scale=0.68]{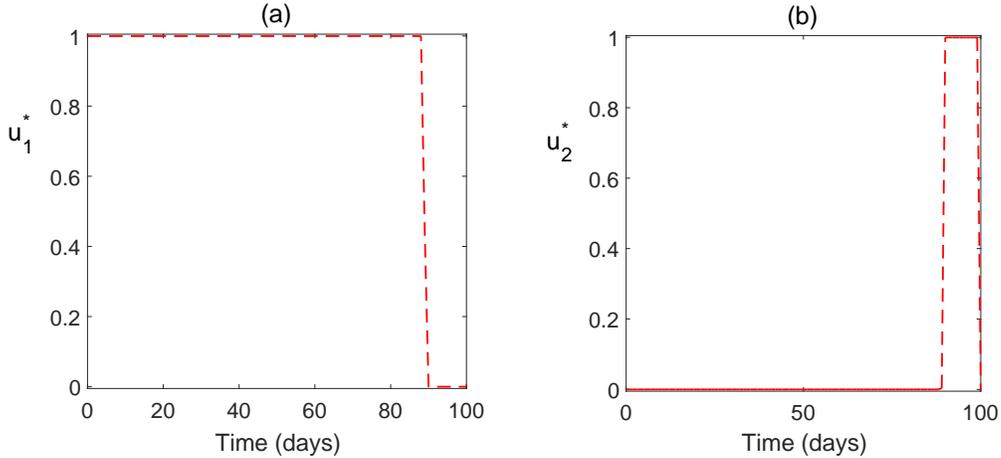}
\caption{Numerical approximation of the 
Pontryagin extremal controls $u_1^*$ and $u_2^*$ \eqref{1.13}.}
\label{fig:10}
\end{figure}
	
% -------------------------------------------
	
\section{Discussion and Conclusion}
\label{sec:7}
	
In this paper, a mathematical model has been proposed and analyzed
to study the effect of awareness programs on the control of pests
in agricultural practice. Namely, the model has for populations
the density of crop biomass, density of susceptible pests,
infected pests, and self-aware population. We suppose that self-aware
people adopt biological control as integrated pest management as it is
eco-friendly and less injurious to human health. With this approach,
the susceptible pest is made infected. Also, we presume that infected pests
cannot harm the crop. For this, awareness campaigns are taken as relative
to the density of susceptible pests present in the crop field. There might
be a delay in measuring the density of susceptible pests or in organizing
awareness programs. Thus, we have included time delays in the modeling process.
	
The proposed model exhibits three equilibria: (i) the origin, which is always unstable,
(ii) the pest-free equilibria, which are stable if the basic reproduction number $R_0$
is less than one while for $R_0 > 1$ it becomes unstable, (iii) the endemic equilibrium,
which exists and is, as well, stable for $R_0 < 1$. From our analytical and numerical study,
we observed that the most significant parameters involved in the system are $a$ and $\tau$, respectively
the awareness population growth rate and the combined time delay. If the impact of awareness campaigns increases,
the density of crop increases.  As a consequence, pest prevalence declines. On the other hand,
endemic equilibrium is locally stable when the delay is less than its critical value, i.e.,
when $\tau < \tau_0=44.5$, approximately. The system loses its stability when $\tau > \tau_0$
and a Hopf-bifurcation occurs at $\tau = \tau_0$. In conclusion, raising awareness among people
with tolerable time delay may be a good aspect for the optimal control of a pest in the crop field,
decreasing the serious issues that pesticides have on human health and surroundings.

It remains open the question of how to define a Lyapunov function 
to prove global stability for the proposed delayed model.
In this direction, the techniques of \cite{Khalid-Hattaf}
should be helpful. This is under investigation and will
be addressed elsewhere.
	
% -------------------------------------------
	
\section*{Acknowledgements}
	
Abraha acknowledges Adama Science and Technology University
for its hospitality and support during this work through
the research grant ASTU/SP-R/027/19.
Torres is grateful to the financial support from the
Portuguese Foundation for Science and Technology
(FCT), through CIDMA and project UIDB/04106/2020.
The authors are very grateful to two anonymous reviewers
for valuable remarks and comments, which significantly 
contributed to the quality of the paper.
	
% -------------------------------------------

% ---------------------------------------------


\begin{thebibliography}{99}
		
\bibitem{Bhatta2}
Bhattacharyya S, Bhattacharya DK.
Pest control through viral disease: mathematical modeling and analysis.
J. Theoret. Biol. 238 (2006), no.~1, 177--197.
		
\bibitem{Franz}
Franz JM, Huber J.
Feldversuche mit insektenpathogenen viren in Europa.
Entomophaga 24 (1979), no.~4, 333--343 .
		
\bibitem{Falcon}
van den Bosch R, Messenger PS, Gutierrez AP.
Microbial Control of Insects, Weeds, and Plant Pathogens.
In: An Introduction to Biological Control, Springer, Boston, MA, 1982, 59--74.
		
\bibitem{Naranjo}
Naranjo SE, Ellsworth PC, Frisvold GB.
Economic value of biological control in integrated pest management of managed plant systems.
Annual Review of Entomology 60 (2015), 621--645.

\bibitem{AMC}
Al Basir F, Ray S, Venturino E.
Role of media coverage and delay in controlling infectious diseases: a mathematical model.
Appl. Math. Comput. 337 (2018), 372--385.

\bibitem{Khan}
Khan GA, Muhammad S, Khan MA.
Information regarding agronomic practices and plant protection measures
obtained by the farmers through electronic media.
Journal of Animal and Plant Sciences 23 (2013), no.~2, 647--650.
		
\bibitem{Kumar2}
Kumar M, Kuppast I, Mankani K, Prakash K, Veershekar T.
Use and awareness of pesticides in Malnad Region of Karnataka.
J. Pharm. Res. 5 (2012), no.~7, 3875--3877.
		
\bibitem{risks}
van Lenteren JC, Bale J, Bigler F, Hokkanen HMT, Loomans AJM.
Assessing risks of releasing exotic biological control agents of arthropod pests.
Annu. Rev. Entomol. 51 (2006), 609--634.
		
\bibitem{Yang2014}
Yang X, Wang F, Meng L, Zhang W, Fan L, Violette G, Coen JR,
Farmer and retailer knowledge and awareness of the risks from pesticide use:
a case study in the Wei river catchment, China.
Science of The Total Environment 497 (2014), 172--179.
		
\bibitem{LeBellec}
Le Bellec F, Rajaud A, Ozier-Lafontaine H, Bockstaller C, Malezieux E.
Evidence for farmers' active involvement in co-designing citrus
cropping systems using an improved participatory method.
Agronomy for Sustainable Development 32 (2012), no.~3, 703--714.
		
\bibitem{mma}
Al Basir F, Venturino E, Roy PK.
Effects of awareness program for controlling mosaic disease in Jatropha curcas plantations.
Math. Methods Appl. Sci. 40 (2017), no.~7, 2441--2453.
		
\bibitem{EC2}
Chowdhury J, Al Basir F, Takeuchi Y, Ghosh M, Roy PK.
A mathematical model for pest management in Jatropha curcas with integrated pesticides: An optimal control approach.
Ecological Complexity 37 (2019), 24--31.
		
\bibitem{MR3660471}
Silva CJ, Torres DFM, Venturino E.
Optimal spraying in biological control of pests.
Math. Model. Nat. Phenom. 12 (2017), no.~3, 51--64.
{\tt arXiv:1704.04073}

\bibitem{MyID:430}
Elazzouzi A, Lamrani Alaoui A, Tilioua M, Torres DFM.
Analysis of a SIRI epidemic model with distributed delay and relapse.
Stat. Optim. Inf. Comput. 7 (2019), no.~3, 545--557.
{\tt arXiv:1812.09626}

\bibitem{JTB}
Al Basir F, Banerjee A, Ray S.
Role of farming awareness in crop pest management---a mathematical model.
J. Theoret. Biol. 461 (2019), 59--67.
		
\bibitem{IJB}
Al Basir F.
A multi-delay model for pest control with awareness induced
interventions---Hopf bifurcation and optimal control analysis,
International Journal of Biomathematics 13 (2020), no.~6, 2050047, 22~pp. 
		
\bibitem{ridm}
Al Basir F, Ray S.
Impact of farming awareness based roguing, insecticide spraying and optimal control
on the dynamics of mosaic disease.
Ricerche di Matematica 69 (2020), 393--412.

\bibitem{EC1}
Al Basir F, Blyuss KB, Ray S.
Modelling the effects of awareness-based interventions to control the mosaic disease of \emph{Jatropha curcas}.
Ecological Complexity 36 (2018), 92--100.
	
\bibitem{57}
Yang X, Chen L, Chen J.
Permanence and positive periodic solution for the single-species
nonautonomous delay diffusive models.
Comput. Math. Appl. 32 (1996), no.~4, 109--116.

\bibitem{CMT} 
Hassard BD, Kazarinoff ND, Wan YH. 
{\it Theory and applications of Hopf bifurcation}, 
London Mathematical Society Lecture Note Series, 41, 
Cambridge University Press, Cambridge, 1981. 

\bibitem{Buttler}
Freedman HI, Sree Hari Rao V.
The trade-off between mutual interference and time lags in predator-prey systems.
Bull. Math. Biol. 45 (1983), no.~6, 991--1004.

\bibitem{IACM} 
Al Basir F, Elaiw AM, Ray S. 
Effect of time delay in controlling crop pest using farming awareness. 
Int. J. Appl. Comput. Math. 5 (2019), no.~4, Paper No.~110, 19~pp. 

\bibitem{KH18} 
Riad D, Hattaf K, Yousfi N. 
Mathematical analysis of a delayed IS--LM model with general investment function. 
The Journal of Analysis 27 (2019), no.~4, 1047--1064.

\bibitem{MR3259239}
Benharrat M, Torres DFM.
Optimal control with time delays via the penalty method.
Math. Probl. Eng. 2014 (2014), Art.~ID 250419, 9~pp.
{\tt arXiv:1407.5168}
		
\bibitem{MR2970905}
Frederico GSF, Torres DFM.
Noether's symmetry theorem for variational and optimal control problems with time delay.
Numer. Algebra Control Optim. 2 (2012), no.~3, 619--630.
{\tt arXiv:1203.3656}

\bibitem{MyID:405}
Lemos-Pai\~{a}o AP, Silva CJ, Torres DFM.
A sufficient optimality condition for non-linear delayed optimal control problems.
Pure Appl. Funct. Anal. 4 (2019), no.~2, 345--361.
{\tt arXiv:1804.06937}

\bibitem{MyID:400}
Allali K, Harroudi S, Torres DFM.
Analysis and optimal control of an intracellular delayed HIV model with CTL immune response.
Math. Comput. Sci. 12 (2018), no.~2, 111--127.
{\tt arXiv:1801.10048}

\bibitem{MR3562914}
Silva CJ, Maurer H, Torres DFM.
Optimal control of a tuberculosis model with state and control delays.
Math. Biosci. Eng. 14 (2017), no.~1, 321--337.
{\tt arXiv:1606.08721}
		
\bibitem{MR4086474}
Silva CJ, Maurer H.
Optimal control of HIV treatment and immunotherapy combination with state and control delays.
Optimal Control Appl. Methods 41 (2020), no.~2, 537--554.
		
\bibitem{Gollmann}
G\"{o}llmann L, Kern D, Maurer H.
Optimal control problems with delays in state and control variables
subject to mixed control-state constraints.
Optimal Control Appl. Methods 30 (2009), no.~4, 341--365.
		
\bibitem{DT1}
Rodrigues F, Silva CJ, Torres DFM, Maurer H.
Optimal control of a delayed HIV model.
Discrete Contin. Dyn. Syst. Ser. B 23 (2018), no.~1, 443--458.
{\tt arXiv:1708.06451}
		
\bibitem{MR4091761}
Campos C, Silva CJ, Torres DFM.
Numerical optimal control of HIV transmission in Octave/MATLAB.
Math. Comput. Appl. 25 (2020), no.~1, 20~pp.
{\tt arXiv:1912.09510}
		
\bibitem{ns}
Chowdhury J, Al Basir F, Pal J, Roy PK.
Pest control for Jatropha curcas plant through viral disease:
a mathematical approach.
Nonlinear Stud. 23 (2016), no.~4, 517--532.
		
\bibitem{hassan}
Laarabi H, Abta A, Hattaf K.
Optimal control of a delayed SIRS epidemic model with vaccination and treatment.
Acta Biotheoretica 63 (2015), no.~2, 87--97.

\bibitem{MyID:433}
Almoaeet MK, Shamsi M, Khosravian-Arab H, Torres DFM.
A collocation method of lines for two-sided space-fractional advection-diffusion equations with variable coefficients.
Math. Methods Appl. Sci. 42 (2019), no.~10, 3465--3480.
{\tt arXiv:1902.09267}
		
\bibitem{hattaf}
Hattaf K, Yousfi N.
Optimal control of a delayed HIV infection model
with immune response using an efficient numerical method.
ISRN Biomath 2012 (2012), Art.~ID 215124, 7~pp.

\bibitem{Khalid-Hattaf}
Hattaf K. 
Global stability and Hopf bifurcation of a generalized viral 
infection model with multi-delays and humoral immunity.
Phys. A 545 (2020), 123689, 14~pp. 

\end{thebibliography}
\end{document}